\newtheoremstyle{mystyle}%   % Name
  {}%                         % Space above
  {}%                         % Space below
  {}%                         % Body font
  {}%                         % Indent amount
  {\bfseries}%                % Theorem head font
  {.}%                        % Punctuation after theorem head
  { }%                        % Space after theorem head, ' ', or \newline
  {}%                         % Theorem head spec (can be left empty, meaning `normal')
\theoremstyle{mystyle}
\theoremstyle{definition}
\newtheorem{theorem}{Theorem}[section]
\newtheorem{proposition}{Proposition}[section]
\newtheorem{lemma}{Lemma}[section]
\newtheorem{corollary}{Corollary}[section]
\newtheorem{definition}{Definition}[section]
\newtheorem{remark}{Remark}[section]
\numberwithin{equation}{section}
\begin{document}
\title[A Choquet--Deny approach to quantum de Finetti theorem]{Variations on quantum de Finetti theorems and operator valued Martin boundaries: a Choquet--Deny approach}

\begin{abstract}
    We revisit the quantum de Finetti theorem. We state and prove a couple of variants thereof. In parallel, we introduce an operator version of the Martin boundary on quantum groups and prove generalizations of Biane's theorem. Our proof of the de Finetti theorem is new in the sense that it is based on an analogy with the theory of operator-valued Martin boundary that we introduce.
\end{abstract}

\author[B. Collins]{Beno\^{i}t Collins}
\address[B. Collins]{Department of Mathematics, Kyoto University, Kitashirakawa, Oiwake-cho, Sakyo-ku, Kyoto, 606-8502, Japan}
\email{collins@math.kyoto-u.ac.jp}

\author[T. Giordano]{Thierry Giordano}
\address[T. Giordano]{Department of Mathematics and Statistics, University of Ottawa, Ottawa, Ontario, Canada}
\email{giordano@uottawa.ca}

\author[R. Sato]{Ryosuke Sato}
\address[R. Sato]{Department of Mathematics, Hokkaido University, Kita 10, Kita-ku, Sapporo, Hokkaido, 060-0810, Japan}
\email{r.sato@math.sci.hokudai.ac.jp}

\maketitle

\allowdisplaybreaks{
%%%%%%%%%%%%%%%%%%%%%%%%%%%%%%%%%%%%%%%%%%%%%%%%%%%%%%%%%%%%%%%%%%%%%%%%%%%%%%%%%%%%%%%%%%%%%%%%%%%%%%%%%%%%%%%%%%%%%%%%%%%%%%%%%%%%%%%%%%%%%%%%%%%%%%%%%%%%%%%
\section{Introduction}
The quantum de Finetti theorem is a fundamental theorem in quantum information theory. It states that the collection of separable states corresponds precisely to the collection of infinitely symmetrically extendable states.
It was developed independently in operator algebras \cite{Stormer} and in quantum information theory \cite{KoenigRenner}.
This remains one of the most powerful tools for detecting bipartite entanglement.
% : a state is entangled if and only if it fails to be $k$-extendable for $k$ sufficiently large. For the definition and a bit of context, we refer to Definition \ref{def:extendable}.
% In general,  proofs of this phenomenon rely heavily on the representation theory of the symmetric group.

%%%%%%%an addition, to be fine-tuned%%%
The classical de Finetti theorem is a cornerstone of probability theory, asserting that an infinitely exchangeable sequence of random variables is a convex mixture of independent and identically distributed (i.i.d.) random variables. This profound result has found its analogues and generalizations in various mathematical fields. Parallel to this, the theory of random walks on classical groups features the concept of a Martin boundary, which provides a geometric and probabilistic framework for characterizing positive harmonic functions---functions invariant under the transition operator of a random walk---and positive sub-harmonic functions. The Choquet--Deny theorem establishes that such harmonic functions can be uniquely represented as integrals over this boundary. Traditionally, these classical theories primarily concern scalar-valued functions. We refer the reader to \cite{Woess} as a general reference on this subject.

In the quantum realm, the quantum de Finetti theorem extends this notion to quantum states, asserting that quantum states of a bipartite quantum system that are symmetrically extendable to arbitrarily larger quantum systems can be approximated by convex combinations of tensor product states. Namely, such states are separable. This finite version of the theorem is particularly vital in quantum information theory, serving as a powerful tool for entanglement detection: a state is entangled if it fails to be $l$-extendable for sufficiently large $l$. Here, $l$ presents the size of extended quantum systems. For the definition and a bit of context, we refer to Definition \ref{def:extendable}.

%%%%%%%%

In this paper, we propose a new approach to  (quantum) de Finetti-type theorems, inspired by the theory of random walks on commutative groups, and, more precisely, on the notion of the Martin boundary and the Choquet-Deny theory.

While conventional proofs of quantum de Finetti theorems often rely on the representation theory of symmetric and unitary groups, our work offers a novel approach by drawing inspiration from non-commutative harmonic analysis and the Martin boundary theory for quantum random walks, pioneered by Biane \cite{Biane92,Biane92-2, Biane08} and Izumi \cite{Izumi}. 

In these theories, the harmonic elements, which are considered, are scalar-valued functions because studying more general functions do not enrich the theory. In their non-commutative generalizations, Biane in \cite{Biane08} and Izumi in \cite{Izumi} did not consider operator-valued boundaries. In this paper, we study operator-valued boundaries and observe that it is closely related to the problem of identifying separable states.

Specifically, we introduce matrix-valued Martin boundaries and sub-harmonic elements, which generalize their scalar-valued counterparts and are shown to be intimately linked to the concept of separable quantum states of a bipartite quantum system. Our notion of $k$-sub-extendability, central to our generalized quantum de Finetti theorem (Theorem 3.2), is directly inspired by these non-commutative sub-harmonic elements. This paper thus establishes a new connection between quantum de Finetti theorems and the operator-valued Martin boundary theory.

After a short section on notations, we provide in Section \ref{sec:qdFthm} a new proof of the generalized quantum de Finetti theorem, inspired by harmonic analysis. It mimics Biane's proof of the non-commutative Choquet-Deny theorem. See Theorem \ref{thm:qD}.

In Section \ref{sec:BCD}, which seems a priori unrelated to the preceding section, we study the matrix-valued Martin boundary on the dual of a compact (quantum) group. This extends the previous work of Biane \cite{Biane92}. Then we observe a phenomenon comparable to the one in Section 3: {\it the matrix-valued Martin boundary is the separable version of its non-matrix-valued counterpart}.

In Section 5, we explore further the relation between sections \ref{sec:qdFthm} and \ref{sec:BCD}  by showing an inclusion between the matrix-valued non-commutative Martin boundary and the collection of extendable matrix-valued states (see Proposition \ref{prop:subharmonic_vs_extensions}). This inclusion is based on a matrix-valued extension of a non-commutative Martingale convergence theorem considered by Biane and Izumi.

The last section delves into the problem of understanding the notion of exponentials better. We extend some results of Biane \cite{Biane92-2} to more groups, and in the matrix-valued setup. See Lemma \ref{lem:hilbert}. This allows us to identify cases when the injection described in part 3 is a bijection.

\medskip
B. C. is supported by JSPS Grant-in-Aid
Scientific Research (B) no. 21H00987,
and Challenging Research (Exploratory) no. 20K20882 and 23K17299.

T. G. is supported in part by a grant from NSERC, Canada.

R. S. was funded by JSPS Research Fellowship for Young Scientists PD (KAKENHI Grand Number 22J00573).

%%%%%%%%%%%%%%%%%%%%%%%%%%%%%%%%%%%%%%%%%%%%%%%%%%%%%%%%%%%%%%%%%%%%%%%%%%%%%%%%%%%%%%%%%%%%%%%%%%%%%%%%%%%%%%%%%%%%%%%%%%%%%%%%%%%%%%%%%%%%%%%%%%%%%%%%%%%%%%%
\section{Notations}\label{sec:notations}
In this section, we summarize the notations and conventions used throughout this paper.
Let $\mathbb{M}_n(\mathbb{C})$ denote the $*$-algebra of $n\times n$ matrices over $\mathbb{C}$ and $\mathrm{Tr}_n$ the (non-normalized) trace on $\mathbb{M}_n(\mathbb{C})$. We denote by $\mathbb{M}_n(\mathbb{C})_+$ the set of all positive semi-definite matrices. In the paper, elements in $\mathbb{M}_n(\mathbb{C})_+$ are simply called \emph{positive}. More generally, let $B(\mathcal{H})$ denote the $*$-algebra of bounded linear operators on a complex Hilbert space $\mathcal{H}$. Here, its multiplication is given as compositions of linear operators and its $*$-operation is given by taking adjoint operators. Let $\mathcal{A}\subseteq B(\mathcal{H})$ be a \emph{unital $C^*$-algebra}, i.e., $\mathcal{A}$ is a unital $*$-subalgebra of $B(\mathcal{H})$ and closed with respect to the topology induced by the operator norm. We denote by $\mathcal{A}_+$ the set of all positive elements in $\mathcal{A}$, that is, by definition, $x\in \mathcal{A}_+$ if and only if $x$ is self-adjoint and the spectrum of $x$ is contained in $\mathbb{R}_{\geq0}$. For any pair $x, y\in \mathcal{A}_+$ we write $x\leq y$ if $y-x\in \mathcal{A}_+$. We remark that $\mathcal{A}_+$ is a proper convex cone and that as $\mathcal{A}_+-\mathcal{A}_+=\mathcal{A}_\mathrm{sa}$ and $\mathcal{A}_+\cap (-\mathcal{A}_+)=\{0\}$, the self-adjoint elements $\mathcal{A}_\mathrm{sa}$ of $\mathcal{A}$ becomes a partially ordered real vector space by defining $x\leq y$ if $y-x\in \mathcal{A}_+$.

Let $\mathcal{A}^*$ denote the set of all (continuous) linear functionals on $\mathcal{A}$. We say that $\rho\in \mathcal{A}^*$ is \emph{positive} if $\rho(x)\geq0$ for any $x\in \mathcal{A}_+$, and $\mathcal{A}^*_+$ denotes the set of all positive linear functionals on $\mathcal{A}$. In addition, if $\rho(1_\mathcal{A})=1$ holds, then $\rho$ is said to be a \emph{state} of $\mathcal{A}$. Here, $1_\mathcal{A}$ is the multiplicative unit in $\mathcal{A}$. We say that $\rho\in \mathcal{A}^*_+$ is \emph{faithful} if $\rho(x)>0$ for any $x\in \mathcal{A}_+\backslash\{0\}$.

If $\mathcal{H}$ is finite-dimensional, then $B(\mathcal{H})$ coincides with $\mathrm{End}(\mathcal{H})$ (and $\mathbb{M}_n(\mathbb{C})$ if $\dim \mathcal{H}=n$). Namely, all linear operators on $\mathcal{H}$ are bounded. Moreover, the trace $\mathrm{Tr}$ on $B(\mathcal{H})$ can be defined. Using $\mathrm{Tr}$, we define the \emph{Hilbert--Schmidt inner product} on $B(\mathcal{H})$ by $\langle A, B\rangle:=\mathrm{Tr}(B^*A)$ for all $A, B\in B(\mathcal{H})$. Moreover, by the Hilbert--Schmidt inner product, $B(\mathcal{H})$ can be naturally identified with its dual space $B(\mathcal{H})^*$.

We will use the terminology and results from convex analysis in \cite{Phelps:book}, more precisely from Chapter 13 of \cite{Phelps:book}. Let $K$ be a proper convex cone (i.e., $K\cap (-K)=\{0\}$) and $E:=K-K$ be the associated partially ordered real vector space. Recall that a \emph{ray} $\alpha$ of $K$ is a set of the form $\mathbb{R}x =\{rx\mid r\geq 0\}$, where $x\in K\backslash\{0\}$. A ray $\alpha$ is said to be an \emph{extreme ray} of $K$ if whenever $x\in \alpha$ and $x=\lambda y+(1-\lambda)z$ ($y, z\in K$, $0<\lambda<1$), then $y, z\in \alpha$. We denote by $\mathrm{exr}(K)$ the union of extreme rays of $K$. Then an element $x\in K$ is in $\mathrm{exr}(K)$ if and only if $y=rx$ for some $r\geq 0$ whenever $0\leq y\leq x$.

If $K$ is a closed convex set, a nonempty subset $C$ of $K$ is called a \emph{cap} of $K$ if $C$ is compact, convex, and $K\backslash C$ is also convex. Note (\cite[Prop 13.1]{Phelps:book}) that if $C$ is a cap of the closed convex cone and $x$ is an extreme point of $C$ then $x$ lies on an extreme ray of $K$.

Recall (\cite[page 1]{Phelps:book}) also that the following notion which we will first use in Proposition \ref{prop:int_rep}. Let $X$ be a nonempty compact subset of a locally convex space $E$ and $\mu$ a Borel probability measure on $X$. A point $x\in E$ is represented by $\mu$ if $f(x)=\int_X fd\mu$ for every continuous linear functional on $E$. Moreover, if $\mu_1$ and $\mu_2$ are nonnegative regular Borel measure on $X$, then $\mu_1\prec \mu_2$ if $\int_X fd\mu_1\leq \int_Xfd\mu_2$ for any convex function $f$ on $X$.

%%%%%%%%%%%%%%%%%%%%%%%%%%%%%%%%%%%%%%%%%%%%%%%%%%%%%%%%%%%%%%%%%%%%%%%%%%%%%%%%%%%%%%%%%%%%%%%%%%%%%%%%%%%%%%%%%%%%%%%%%%%%%%%%%%%%%%%%%%%%%%%%%%%%%%%%%%%%%%%
\section{The generalized quantum de Finetti theorem via Choquet theory}\label{sec:qdFthm}

Throughout this section, we fix two natural numbers $n, m$ and $\rho\in \mathbb{M}_n(\mathbb{C})^*_+$ that is faithful. The identity map on $\mathbb{M}_n(\mathbb{C})$ is simply denoted by $\mathrm{id}_n$. Let $S_l$ denote the symmetric group of degree $l\geq 1$, and it acts on $\mathbb{M}_n(\mathbb{C})^{\otimes l}$ by
\[\sigma\cdot(x_1\otimes \cdots \otimes x_l):=x_{\sigma(1)}\otimes \cdots \otimes x_{\sigma(l)} \quad (\sigma \in S_l,\, x_1, \dots, x_l\in \mathbb{M}_n(\mathbb{C})).\]
We then denote by $(\mathbb{M}_n(\mathbb{C})^{\otimes l})^{S_l}$ the $*$-subalgebra of fixed points under this action. Moreover, the convex cone of \emph{sub-martingale sequences of symmetric quantum states} is defined by 
\begin{equation}\label{eq:extensions}
    K_{m, \rho}:=\left\{(x_l)_{l=0}^\infty\in \prod_{l=0}^\infty (\mathbb{M}_m(\mathbb{C})\otimes (\mathbb{M}_n(\mathbb{C})^{\otimes l})^{S_l})_+\, \middle|\, (\mathrm{id}_m\otimes \mathrm{id}_n^{\otimes l}\otimes \rho)(x_{l+1})\leq x_l \text{ for all }l\geq0\right\},
\end{equation}
where $\mathbb{M}_n(\mathbb{C})^{\otimes 0}:=\mathbb{C}$ and $S_0$ is the trivial group. We equip $K_{m, \rho}$ with the topology of component-wise weak${}^*$ convergence, where $(\mathbb{M}_n(\mathbb{C})^{\otimes l})^{S_l}$ is identified with the subspace of the $S_l$-invariant elements in $(\mathbb{M}_n(\mathbb{C})^{\otimes l})^*$ by the Hilbert--Schmidt inner product. We also remark that $K_{m, \rho}$ is a convex cone by component-wise operations.

\begin{lemma}\label{lem:choquet_convex1}
    $K_{m, \rho}$ is the closed convex hull of $\mathrm{exr}(K_{m, \rho})$.
\end{lemma}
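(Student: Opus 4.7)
The plan is to realize $K_{m,\rho}$ as a union of caps and then invoke Krein--Milman together with \cite[Prop.~13.1]{Phelps:book}, which guarantees that extreme points of caps of a closed convex cone lie on its extreme rays. Before doing so, I would note that $K_{m,\rho}$ is itself closed in the product topology: each positivity condition is closed, and each submartingale inequality is the preimage of a closed cone under a continuous linear map.

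For each $N > 0$ I would set
\[
C_N := \bigl\{(x_l)_{l=0}^\infty \in K_{m,\rho} \,\big|\, \mathrm{Tr}(x_0) \leq N\bigr\}.
\]
Convexity of $C_N$ and of $K_{m,\rho}\setminus C_N$ is immediate from the linearity of $\mathrm{Tr}$ on the first coordinate. The essential step is to show compactness of $C_N$. For this, I would iterate the defining submartingale inequality to deduce
\[
(\mathrm{id}_m \otimes \rho^{\otimes l})(x_l) \leq x_0
\]
for every $(x_l) \in K_{m,\rho}$ and every $l$. Writing $\rho(\cdot) = \mathrm{Tr}(D\,\cdot)$ with $D$ strictly positive (which holds because $\rho$ is faithful) provides a constant $c > 0$ with $\rho(y) \geq c\,\mathrm{Tr}(y)$ on $\mathbb{M}_n(\mathbb{C})_+$. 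Applying $\mathrm{Tr}_m$ to both sides of the iterated inequality and inserting this lower bound yields $\mathrm{Tr}(x_l) \leq c^{-l}\,\mathrm{Tr}(x_0) \leq c^{-l} N$, so for each fixed $l$ the coordinate $x_l$ of every element of $C_N$ is confined to a norm-bounded subset of the finite-dimensional space $\mathbb{M}_m(\mathbb{C}) \otimes (\mathbb{M}_n(\mathbb{C})^{\otimes l})^{S_l}$. Tychonoff then makes $C_N$ a closed subset of a compact product, hence compact.

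With each $C_N$ exhibited as a cap, Krein--Milman writes $C_N$ as the closed convex hull of its extreme points, and \cite[Prop.~13.1]{Phelps:book} places each such extreme point on an extreme ray of $K_{m,\rho}$. Since every $(x_l)\in K_{m,\rho}$ belongs to some $C_N$, it lies in the closed convex hull of $\mathrm{exr}(K_{m,\rho})$, which is the claim. The main obstacle I anticipate is the coordinate-wise compactness estimate above; everything else is standard convex analysis for cones, and it is precisely the faithfulness of $\rho$ that upgrades the submartingale bound at level $0$ into a uniform bound at every level.
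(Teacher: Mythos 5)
Your proof is correct and follows essentially the same route as the paper: both exhibit $K_{m,\rho}$ as the union of the caps $C_r=\{x\in K_{m,\rho}\mid \mathrm{Tr}_m(x_0)\leq r\}$, prove their compactness by iterating the sub-martingale inequality to bound each coordinate (the paper stops at the bound $(\mathrm{Tr}_m\otimes\rho^{\otimes l})(x_l)\leq r$, which already suffices in finite dimensions, while you additionally convert it to a plain trace bound via faithfulness of $\rho$), and then conclude via the cap decomposition theorem of Phelps. Your explicit appeal to Krein--Milman plus \cite[Prop.~13.1]{Phelps:book} is just an unwinding of the Choquet-type theorem on unions of caps that the paper cites directly.
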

\begin{proof}
    For any $r>0$ we define
    \[C_r:=\{x=(x_l)_{l=0}^\infty\in K_{m, \rho}\mid \mathrm{Tr}_m(x_0)\leq r\}.\]
    For any $l\geq 1$ and $x\in C_r$ we have $(\mathrm{Tr}_m\otimes \rho^{\otimes l})(|x_l|)=(\mathrm{Tr}_m\otimes \rho^{\otimes l})(x_l)\leq \mathrm{Tr}_m(x_0)\leq r$. Thus, $C_r$ is compact, and hence $C_r$ is a cap of $K_{m, \rho}$ (see \cite[Proposition 13.2]{Phelps:book}). Moreover, $K_{m, \rho}=\bigcup_{r>0} C_r$ holds. Thus, the assertion follows from the Choquet theorem \cite[p.81]{Phelps:book}.
\end{proof}

We remark that the above proof showed that $K_{m, \rho}$ is a union of its caps. Thus, combining with results in \cite[Chapter 13]{Phelps:book}, we obtain the following representation theorem.

\begin{proposition}\label{prop:int_rep}
    Every $x\in K_{m, \rho}\backslash\{0\}$ can be represented by a Borel probability measure $M_x$ on $K_{m, \rho}$ such that $M_x$ is supported on any closed subset of $K_{m, \rho}$ which contains $\mathrm{exr}(K_{m, \rho})$.
\end{proposition}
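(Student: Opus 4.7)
The plan is to reduce to the classical Choquet integral representation theorem on a compact metrizable cap and then push the resulting measure forward to $K_{m,\rho}$.

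Fix $x=(x_l)_{l=0}^\infty\in K_{m,\rho}\setminus\{0\}$. First, I would check that $x_0\neq 0$: if $x_0=0$, the sub-martingale inequality gives $0\leq(\mathrm{id}_m\otimes\rho)(x_1)\leq x_0=0$, and writing $x_1=(y_{ij})_{i,j}$ as a positive block matrix with entries in $\mathbb{M}_n(\mathbb{C})$, faithfulness of $\rho$ forces the diagonal blocks $y_{ii}$ to vanish, after which the standard $C^*$-estimate $\|y_{ij}\|^2\leq\|y_{ii}\|\|y_{jj}\|$ yields $x_1=0$; iterating gives $x=0$, a contradiction. Hence $r:=\mathrm{Tr}_m(x_0)>0$ and $x\in C_r$. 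From the proof of Lemma \ref{lem:choquet_convex1}, $C_r$ is a compact convex cap of $K_{m,\rho}$. Since each factor $\mathbb{M}_m(\mathbb{C})\otimes(\mathbb{M}_n(\mathbb{C})^{\otimes l})^{S_l}$ is finite-dimensional, the countable product of weak${}^*$ topologies is metrizable, so $C_r$ is a compact metrizable convex subset of a locally convex space.

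Second, I would invoke Choquet's integral representation theorem in its metrizable form (\cite[\S3]{Phelps:book}): there is a Borel probability measure $\nu$ on $C_r$ with barycenter $x$, i.e., $\varphi(x)=\int_{C_r}\varphi\,d\nu$ for every continuous linear functional $\varphi$ on the ambient space, such that $\nu$ is concentrated on the Borel set $\mathrm{ex}(C_r)$ of extreme points of $C_r$. By \cite[Prop.~13.1]{Phelps:book}, recalled in Section \ref{sec:notations}, every extreme point of the cap $C_r$ lies on an extreme ray of $K_{m,\rho}$; together with the convention that every extreme ray contains $0$, this yields $\mathrm{ex}(C_r)\subseteq\mathrm{exr}(K_{m,\rho})$.

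Third, push $\nu$ forward along the continuous inclusion $\iota\colon C_r\hookrightarrow K_{m,\rho}$ and set $M_x:=\iota_*\nu$. This is a Borel probability measure on $K_{m,\rho}$ representing $x$. If $F\subseteq K_{m,\rho}$ is closed with $\mathrm{exr}(K_{m,\rho})\subseteq F$, then $F\cap C_r$ is a closed subset of $C_r$ containing $\mathrm{ex}(C_r)$, so $\nu(F\cap C_r)=1$, whence $M_x(F)=1$. The only slightly delicate step is the metrizability of $C_r$, which hinges on the finite-dimensionality of each tensor slot; once that is in hand, everything else is a direct application of Choquet theory combined with the cap–extreme-ray transfer from \cite[Prop.~13.1]{Phelps:book}.
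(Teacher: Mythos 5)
Your argument is correct and follows essentially the same route as the paper: place $x$ in a cap $C_r$, apply Choquet's representation theorem there, and transfer extreme points of the cap to extreme rays of $K_{m,\rho}$ via \cite[Proposition 13.1]{Phelps:book}. The only (harmless) differences are that you justify concentration of the representing measure on $\mathrm{ex}(C_r)$ via metrizability of $C_r$ rather than via the Bishop--de Leeuw theorem plus $\prec$-maximality as the paper does, and you add an unneeded but correct verification that $x_0\neq 0$ (one could simply take any $r>\mathrm{Tr}_m(x_0)$).
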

\begin{proof}
    By the proof of Lemma \ref{lem:choquet_convex1}, $K_{m, \rho}$ is a union of its caps. Thus, every $x\in K_{m, \rho}$ is contained in an appropriate cap $C$ (see \cite[Theorem in 82p]{Phelps:book}). By the Choquet-Bishop-de Leeuw theorem applied to $C$ (see \cite[Section 4]{Phelps:book}), $x$ is represented by a Borel probability measure on $C$ which vanishes on the Baire subset of $C\backslash \mathrm{ex}(C)$. By maximality with respect to $\prec$, we may assume that this measure $M_x$ is supported by any closed subset of $C$ which contains $\mathrm{ex}(C)$. Therefore, by \cite[Proposition 13.1]{Phelps:book}, $M_x$ is supported by any closed subset of $K_{m, \rho}$ which contains $\mathrm{exr}(K_{m, \rho})$.
\end{proof}

We obtain the following description of $\mathrm{exr}(K_{m, \rho})$.
\begin{theorem}\label{thm:ext_pt1}
    $\mathrm{exr}(K_{m, \rho})$ is contained in $\{(a\otimes b^{\otimes l})_{l=0}^\infty\mid a\in \mathbb{M}_m(\mathbb{C})_+, b\in \mathbb{M}_n(\mathbb{C})_+, \rho(b)\leq 1\}$.
\end{theorem}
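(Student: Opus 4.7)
Let $(x_l)_l \in \mathrm{exr}(K_{m,\rho})$. The plan is to emulate the non-commutative Choquet--Deny approach indicated in the introduction: for each positive linear functional $\phi$ on $\mathbb{M}_n(\mathbb{C})$ dominated by $\rho$, produce an auxiliary element of $K_{m,\rho}$ dominated by $(x_l)_l$ in the cone order, use extremality of $(x_l)_l$ to force this element onto the ray $\{r(x_l)_l : r \geq 0\}$, and then unfold the resulting scalar ratio $\lambda_\phi$ into a tensor-product structure.

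Concretely, I would set $x^\phi_l := (\mathrm{id}_m \otimes \phi \otimes \mathrm{id}_n^{\otimes l})(x_{l+1})$. This lies in $(\mathbb{M}_m(\mathbb{C}) \otimes (\mathbb{M}_n(\mathbb{C})^{\otimes l})^{S_l})_+$ by the $S_{l+1}$-invariance of $x_{l+1}$, and the sub-martingale inequality for $(x^\phi_l)_l$ follows by applying the positive map $\mathrm{id}_m \otimes \phi \otimes \mathrm{id}_n^{\otimes l}$ to the sub-martingale condition one level up; invoking the $S_{l+1}$-symmetry to freely permute $\phi$ and $\rho$ among the factors gives $x^\phi_l \leq x_l$ componentwise whenever $\phi \leq \rho$. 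Assuming the stronger cone-order comparison $x - x^\phi \in K_{m,\rho}$, extremality forces $x^\phi = \lambda_\phi \cdot x$ for some $\lambda_\phi \in [0, 1]$. The map $\phi \mapsto \lambda_\phi$ is additive and positive on $\{0 \leq \phi \leq \rho\}$ and therefore extends to a linear functional of the form $\lambda_\phi = \phi(b)$ for a unique $b \in \mathbb{M}_n(\mathbb{C})_+$; the bound $\rho(b) \leq 1$ comes from the choice $\phi = \rho$. Finally, the identity $(\mathrm{id}_m \otimes \phi \otimes \mathrm{id}_n^{\otimes l})(x_{l+1}) = \phi(b)\, x_l$, valid for every $\phi$, is equivalent via the Hilbert--Schmidt identification to the tensor factorization $x_{l+1} = x_l \otimes b$ in the appropriate symmetrized sense, and induction starting from $a := x_0$ then yields $x_l = a \otimes b^{\otimes l}$ with $a \in \mathbb{M}_m(\mathbb{C})_+$ (since $x_0 \geq 0$) and $b \in \mathbb{M}_n(\mathbb{C})_+$ (since $\phi \mapsto \lambda_\phi$ is positive).

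The principal obstacle is justifying the cone-order comparison $x - x^\phi \in K_{m,\rho}$, which does not follow immediately from the componentwise inequality. I plan to address this through a Doob-type decomposition: the martingale limit
\[
M_l := \lim_{N \to \infty}(\mathrm{id}_m \otimes \mathrm{id}_n^{\otimes l} \otimes \rho^{\otimes(N-l)})(x_N)
\]
exists by monotone convergence in finite dimension, and both $M$ and $x - M$ lie in $K_{m,\rho}$ (the first as a martingale, the second by a direct check from the sub-martingale inequality). Extremality then yields a dichotomy: either $x$ is proportional to $M$, in which case $x$ is itself a $\rho$-martingale, and conjugating by $D^{1/2}$ on each $\mathbb{M}_n(\mathbb{C})$ factor (with $D$ the density matrix of $\rho$) reduces the transition to the standard partial trace, producing an $S_\infty$-invariant functional on $\mathbb{M}_m(\mathbb{C}) \otimes \mathbb{M}_n(\mathbb{C})^{\otimes \infty}$ to which St{\o}rmer's quantum de Finetti theorem applies and gives the product form with $\rho(b)=1$; or $M=0$, which I would handle by a ``cemetery'' extension $\mathbb{M}_n(\mathbb{C}) \hookrightarrow \mathbb{M}_n(\mathbb{C}) \oplus \mathbb{C}$ with $\tilde\rho := \rho \oplus 1$, absorbing the sub-martingale defect into the additional factor to obtain a genuine martingale on the enlarged system, then restricting back to recover $x_l = a \otimes b^{\otimes l}$ with $\rho(b) \leq 1$.
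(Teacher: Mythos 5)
You are right to be suspicious of the step ``componentwise domination plus extremality gives proportionality,'' and in fact you have put your finger on precisely the point that the paper's own proof passes over in silence. The characterization of $\mathrm{exr}(K_{m,\rho})$ used here requires $0\le y\le x$ in the order induced by the cone $K_{m,\rho}$ itself, i.e.\ both $y\in K_{m,\rho}$ and $x-y\in K_{m,\rho}$; the paper verifies only that $x_\nu\in K_{m,\rho}$ and that $x_{\nu,l}\le x_l$ in each component, and then invokes extremality. The missing inequality, $(\mathrm{id}_m\otimes\mathrm{id}_n^{\otimes l}\otimes\rho)(x_{l+1}-x_{\nu,l+1})\le x_l-x_{\nu,l}$, rearranges to a second-difference condition that the sub-martingale hypothesis does not supply. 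Your Doob-type decomposition is sound as far as it goes: $M_l:=\lim_N(\mathrm{id}_m\otimes\mathrm{id}_n^{\otimes l}\otimes\rho^{\otimes(N-l)})(x_N)$ exists, $M\in K_{m,\rho}$, $x-M\in K_{m,\rho}$, and extremality does force the dichotomy $x=M$ or $M=0$. Your Case 1 can be completed with the ancilla version of St\o rmer's theorem (extreme symmetric states of $\mathbb{M}_m(\mathbb{C})\otimes\mathbb{M}_n(\mathbb{C})^{\otimes\infty}$ are products $\mu\otimes\nu^{\otimes\infty}$), though this imports exactly the classical de Finetti machinery the paper is trying to bypass.

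The genuine gap is Case 2, and it cannot be closed. Writing $d_l:=x_l-(\mathrm{id}_m\otimes\mathrm{id}_n^{\otimes l}\otimes\rho)(x_{l+1})$ for the defect, the cemetery construction forces the component of $\tilde x_{l+2}$ with exactly $l$ surviving $\mathbb{M}_n(\mathbb{C})$-factors to be $d_l-(\mathrm{id}_m\otimes\mathrm{id}_n^{\otimes l}\otimes\rho)(d_{l+1})=x_l-2(\mathrm{id}\otimes\rho)(x_{l+1})+(\mathrm{id}\otimes\rho^{\otimes2})(x_{l+2})$, and positivity of the enlarged martingale requires this (and every higher finite difference) to be positive, i.e.\ complete monotonicity of the sequence --- which a general element of $K_{m,\rho}$ does not satisfy. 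Concretely, take $m=1$, $n=2$, $\rho=\mathrm{Tr}_2$, and $x=(1,P,0,0,\dots)$ with $P$ a rank-one projection. Then $x\in K_{1,\rho}$, and a direct check shows it generates an extreme ray: if $y\in K_{1,\rho}$ and $x-y\in K_{1,\rho}$, then $y_l=0$ for $l\ge2$, $0\le y_1\le P$ forces $y_1=sP$, and the two trace inequalities force $y_0=s$, so $y=sx$. This $x$ has $M=0$ and is not of the form $(a\otimes b^{\otimes l})_l$, since that would force $b=P$ and $x_2=P\otimes P\ne0$. So no argument can complete your Case 2: the obstruction you identified in your first paragraph is not a removable technicality but an actual failure of the statement (and of the published proof, which commits the componentwise-versus-cone-order substitution you flagged). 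Any repair must either shrink $K_{m,\rho}$ (e.g.\ to genuine martingale sequences, where your Case 1 applies) or restrict the claim to those extreme rays arising as limits along the sub-extension procedure used in Theorem \ref{thm:qD}.
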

\begin{proof}
    Let us fix $x=(x_l)_{l=0}^\infty\in \mathrm{exr}(K_{m, \rho})$. For any $\nu\in \mathbb{M}_n(\mathbb{C})^*_+$ we define
    \[x_\nu=(x_{\nu, l})_{l=0}^\infty:=((\mathrm{id}_m\otimes \mathrm{id}_n^{\otimes l}\otimes \nu)(x_{l+1}))_{l=0}^\infty.\]
    Since $\mathrm{id}_m\otimes \mathrm{id}_n^{\otimes l}\otimes \nu$ is (completely) positive, we have $x_{\nu, l}\in (\mathbb{M}_m(\mathbb{C})\otimes (\mathbb{M}_n(\mathbb{C})^{\otimes l})^{S_l})_+$. Moreover, since $x_{\nu, l}$ is $S_l$-invariant, we have
    \[(\mathrm{id}_m\otimes \mathrm{id}_n^{\otimes l}\otimes \rho)(x_{\nu, l+1})=(\mathrm{id}_m\otimes \mathrm{id}_n^{\otimes l}\otimes \rho\otimes \nu)(x_{l+2}) \leq (\mathrm{id}_m\otimes \mathrm{id}_n^{\otimes l}\otimes \nu)(x_{l+1})=x_{\nu, l}.\]
    Thus, we have $x_\nu\in K_{m, \rho}$. If $\nu\leq \rho$, for any $l\geq 0$ we also have
    \[x_{\nu, l}=(\mathrm{id}_m\otimes \mathrm{id}_n^{\otimes l}\otimes \nu)(x_{l+1})\leq (\mathrm{id}_m\otimes \mathrm{id}_n\otimes \rho)(x_{l+1})\leq x_l.\]
    Thus, $x_\nu\leq x$ holds, and hence there exists $b(\nu)\geq 0$ such that $x_\nu=b(\nu)x$ since $x\in \mathrm{exr}(K_{m, \rho})$. We claim that the mapping from $\nu$ to $b(\nu)$ can be extended linearly to a functional on $\mathbb{M}_n(\mathbb{C})^*$. Let $\nu\in \mathbb{M}_n(\mathbb{C})^*_+$ (not necessarily $\nu \leq \rho$). Since $\rho$ is faithful on $\mathbb{M}_n(\mathbb{C})$, there exists $r>0$ such that $r\nu\leq \rho$. Thus, for some $b(r\nu)>0$, we have $x_{r\nu}=b(r\nu)x$ and $rx_\nu=x_{r\nu}=b(r\nu)x$. Namely, $b(r\nu)/r$ depends only on $\nu$, and hence $b(\nu):=rb(\nu/r)$ is well defined. Moreover, the mapping $b\colon \mathbb{M}_n(\mathbb{C})^*_+\to \mathbb{R}_{\geq 0}$ is additive and positive-homogeneous. By the Jordan decomposition (see e.g., \cite{Pederson}), every linear functional on $\mathbb{M}_n(\mathbb{C})$ uniquely decomposes into a linear combination of four positive linear functionals. Thus, the mapping $b$ can be extended to a linear functional on $\mathbb{M}_n(\mathbb{C})$, and $b$ is naturally in $\mathbb{M}_n(\mathbb{C})=(\mathbb{M}_n(\mathbb{C})^*)^*$. Moreover, we have $b\in \mathbb{M}_n(\mathbb{C})_+$, and $(\mathrm{id}_m\otimes \mathrm{id}_n^{\otimes l}\otimes \nu)(x_{l+1})=(\mathrm{id}_m\otimes \mathrm{id}_n^{\otimes l}\otimes \nu)(x_l\otimes b)$ for any $l\geq0$ and $\nu\in \mathbb{M}_n(\mathbb{C})^*$. It implies that $x_{l+1}=x_l\otimes b$, and hence, iterating over $l$, we have $x_l=x_0\otimes b^{\otimes l}$.
\end{proof}

Let us recall that $a\in (\mathbb{M}_m(\mathbb{C})\otimes \mathbb{M}_n(\mathbb{C}))_+$ is called \emph{separable} if $a$ is in the convex hull of $\mathbb{M}_m(\mathbb{C})_+\otimes \mathbb{M}_n(\mathbb{C})_+:=\{b\otimes c\mid b\in \mathbb{M}_m(\mathbb{C})_+, \, c\in \mathbb{M}_n(\mathbb{C})_+\}$. Based on the results in this section, we obtain a characterization of separable elements, which is a generalization of the quantum de Finetti theorem.
\begin{definition}\label{def:extendable}
    Let $l$ be a strictly positive integer, $\rho\in \mathbb{M}_n(\mathbb{C})^*_+$ and $a\in (\mathbb{M}_m(\mathbb{C})\otimes \mathbb{M}_n(\mathbb{C}))_+$. An element $b$ in $(\mathbb{M}_m(\mathbb{C})\otimes (\mathbb{M}_n(\mathbb{C})^{\otimes l})^{S_l})_+$ a \emph{$l$-sub-extension} of $a$ (with respect to $\rho$) if 
    \[(\mathrm{id}_m\otimes \mathrm{id}_n\otimes \rho^{\otimes (l-1)})(b)\leq a.\] 
    If $a$ has a $l$-sub-extension, then $a$ is said to be \emph{$l$-sub-extendable} (with respect to $\rho$).
    If the inequalities of this definition are replaced by equalities, the notion of sub-extension (resp. sub-extendable) becomes extension (resp. extendable). 
\end{definition}

This notion is quite important as a means of characterizing separable states, and detecting entanglement. 
    %ref
We refer to \cite{KoenigRenner} for papers laying the foundations of such applications.

Here we state our first main result.
\begin{theorem}[Generalized Quantum de Finetti Theorem]\label{thm:qD}
    For every $a\in (\mathbb{M}_m(\mathbb{C})\otimes \mathbb{M}_n(\mathbb{C}))_+$, the following are equivalent:
    \begin{enumerate}
        \item $a$ is separable.
        \item There exists a faithful linear functional $\rho\in \mathbb{M}_n(\mathbb{C})_+^*$ such that $a$ is $l$-sub-extendable with respect to $\rho$ for every $l\geq1$.
        \item $a$ is $l$-sub-extendable with respect to every faithful linear functional on $\mathbb{M}_n(\mathbb{C})$ and for every integer $l\geq1$.
    \end{enumerate}
\end{theorem}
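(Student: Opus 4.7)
The plan is to close the cycle $(3)\Rightarrow(2)\Rightarrow(1)\Rightarrow(3)$. Only $(3)\Rightarrow(2)$ is immediate; the other two implications respectively require an explicit tensor construction and an application of the Choquet decomposition of $K_{m,\rho}$ developed in Proposition~\ref{prop:int_rep} and Theorem~\ref{thm:ext_pt1}.

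For $(1)\Rightarrow(3)$, I would start from a separable decomposition $a=\sum_i b_i\otimes c_i$ with $b_i\in\mathbb{M}_m(\mathbb{C})_+$ and $c_i\in\mathbb{M}_n(\mathbb{C})_+\setminus\{0\}$. Given any faithful $\rho$, faithfulness yields $\rho(c_i)>0$, and the explicit positive $S_l$-invariant element
\[
  b := \sum_i \rho(c_i)^{-(l-1)}\, b_i \otimes c_i^{\otimes l}
\]
satisfies $(\mathrm{id}_m\otimes\mathrm{id}_n\otimes\rho^{\otimes(l-1)})(b)=a$ by a one-line computation, providing an $l$-sub-extension of $a$ for every $l\geq 1$ and every faithful $\rho$.

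For the central implication $(2)\Rightarrow(1)$, fix a faithful $\rho$ witnessing $(2)$ and, for each $l$, select an $l$-sub-extension $y^{(l)}$ of $a$ saturating the defining inequality. Lift it to a sequence $x^{(l)}\in K_{m,\rho}$ by setting $x^{(l)}_k := (\mathrm{id}_m\otimes\mathrm{id}_n^{\otimes k}\otimes\rho^{\otimes(l-k)})(y^{(l)})$ for $0\leq k\leq l$ and $x^{(l)}_k:=0$ for $k>l$; the $S_l$-invariance of $y^{(l)}$ transfers to $S_k$-invariance of each $x^{(l)}_k$, and the defining sub-martingale inequality of $K_{m,\rho}$ holds by a direct check (with equality for $k<l$). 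The identity $x^{(l)}_0=(\mathrm{id}_m\otimes\rho)(a)$ places all $x^{(l)}$ in the compact cap $C_r$ with $r=(\mathrm{Tr}_m\otimes\rho)(a)$ (as in the proof of Lemma~\ref{lem:choquet_convex1}), so a component-wise convergent subnet yields a limit $x\in K_{m,\rho}$ with $x_1=a$. Applying Proposition~\ref{prop:int_rep}, $x$ is represented by a Borel probability measure $M_x$ supported on a closed set containing $\mathrm{exr}(K_{m,\rho})$, and by Theorem~\ref{thm:ext_pt1} every such support point has the form $(c\otimes d^{\otimes k})_{k\geq 0}$. Reading off the $k=1$ coordinate,
\[
  a = x_1 = \int c\otimes d\, dM_x(c,d),
\]
which lies in the closed convex cone generated by $\mathbb{M}_m(\mathbb{C})_+\otimes\mathbb{M}_n(\mathbb{C})_+$; in finite dimension this cone coincides with its own closure, so $a$ is separable.

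The hardest step will be the saturation: arranging $(\mathrm{id}_m\otimes\mathrm{id}_n\otimes\rho^{\otimes(l-1)})(y^{(l)})=a$ rather than only $\leq a$, which is needed to guarantee $x_1=a$ in the limit. I would maximize the continuous linear functional $y\mapsto (\mathrm{Tr}_m\otimes\mathrm{Tr}_n)((\mathrm{id}_m\otimes\mathrm{id}_n\otimes\rho^{\otimes(l-1)})(y))$ over the compact convex set of $l$-sub-extensions lying in a fixed cap, and then invoke the elementary fact that $0\leq s\leq a$ with $\mathrm{Tr}(s)=\mathrm{Tr}(a)$ forces $s=a$ in the positive cone. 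Verifying that this maximum actually attains $(\mathrm{Tr}_m\otimes\mathrm{Tr}_n)(a)$---equivalently, that $l$-sub-extendability for every $l$ upgrades to $l$-extendability for every $l$---is the technical core of the proof and requires using the full hypothesis that sub-extensions exist at every level simultaneously.
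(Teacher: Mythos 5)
Your implications $(3)\Rightarrow(2)$ and $(1)\Rightarrow(3)$ are correct and essentially identical to the paper's. The problem is the ``saturation'' step you isolate as the technical core of $(2)\Rightarrow(1)$: you need the chosen sub-extensions $y^{(l)}$ to satisfy $(\mathrm{id}_m\otimes\mathrm{id}_n\otimes\rho^{\otimes(l-1)})(y^{(l)})=a$ exactly, so that the first coordinate of your limit sequence is $a$ rather than something strictly below it. You have not proved this, and your proposed variational argument does not obviously close: showing that the supremum of the marginal trace over $l$-sub-extensions attains $(\mathrm{Tr}_m\otimes\mathrm{Tr}_n)(a)$ is tantamount to showing that $l$-sub-extendability upgrades to exact $l$-extendability, which (a priori) is as strong as the separability you are trying to prove --- the only evident route to it is via the theorem itself. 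As it stands, your limit $x$ only has $x_1\leq a$, and the Choquet argument then yields separability of $x_1$, not of $a$. This is a genuine gap.

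The gap is also unnecessary, and this is precisely why the paper works with the \emph{sub}-martingale cone $K_{m,\rho}$, whose defining conditions are inequalities. Instead of lifting each $y^{(l)}$ to a full truncated sequence and passing to a limit in $l$, fix the coordinate index $l\geq 2$ and take weak${}^*$ limits over $k$ of the pushforwards $x_{l,k}:=(\mathrm{id}_m\otimes\mathrm{id}_n^{\otimes l}\otimes\rho^{\otimes(k-l)})(y^{(k)})$ (each of which is again an $l$-sub-extension of $a$ and lies in a fixed compact set), choosing the subsequences compatibly by a diagonal argument so that $(\mathrm{id}_m\otimes\mathrm{id}_n^{\otimes l}\otimes\rho)(x_{l+1,\infty})\leq x_{l,\infty}$. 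Then simply \emph{declare} $x_{1,\infty}:=a$ and $x_{0,\infty}:=(\mathrm{id}_m\otimes\rho)(a)$: the inequality $(\mathrm{id}_m\otimes\mathrm{id}_n\otimes\rho)(x_{2,\infty})\leq a$ holds automatically because $x_{2,\infty}$ is a limit of $2$-sub-extensions of $a$ and the positive cone is closed. This produces an element of $K_{m,\rho}$ with first coordinate exactly $a$, after which your Choquet/extreme-ray argument (Proposition \ref{prop:int_rep}, Theorem \ref{thm:ext_pt1}, and closedness of the separable cone in finite dimensions) goes through verbatim. With that replacement your proof matches the paper's.
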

\begin{proof}
    Let $\rho, \nu\in \mathbb{M}_n(\mathbb{C})_+^*$ be faithful. Then, there exists $r>0$ such that $r\nu \leq \rho$. Thus, if $b$ is a $l$-sub-extension of $a$ with respect to $\rho$, then $r^{l-1}b$ is an $l$-sub-extension of $a$ with respect to $\nu$. Therefore, (2) and (3) are equivalent.

    To show that (1) and (2) are equivalent, let us first note that if $a=b\otimes c$ for some $b\in \mathbb{M}_m(\mathbb{C})_+$ and $c\in \mathbb{M}_n(\mathbb{C})_+$, then we have $a=(\mathrm{id}_m\otimes \mathrm{id}_n\otimes \rho^{\otimes (l-1)})(\rho(c)^{-(l-1)}b\otimes c^{\otimes l})$, i.e., $a$ is $l$-sub-extendable for all $l\geq 1$. Hence, if $a$ is separable, it is clearly $l$-sub-extendable for all $l\geq 1$, and consequently (1) implies (2).
    
    Conversely, we assume that there exist $l$-sub-extensions $x_l$ of $a$ for all $l\geq1$. For any $k\geq l$ we define
    \[x_{l, k}:=(\mathrm{id}_m\otimes \mathrm{id}_n^{\otimes l}\otimes \rho^{\otimes k-l})(x_k) \in (\mathbb{M}_m(\mathbb{C})\otimes (\mathbb{M}_n(\mathbb{C})^{\otimes l})^{S_l})_+,\]
    which is also $l$-sub-extension of $a$. Since $(\mathrm{Tr}_m\otimes \rho^{\otimes l})(|x_{l, k}|)=(\mathrm{Tr}_m\otimes \rho^{\otimes k})(x_{l, k})\leq (\mathrm{Tr}_m\otimes \rho)(a)$, the sequence $(x_{l, k})_{k\geq l}$ is contained in a compact subset of $(\mathbb{M}_m(\mathbb{C})\otimes (\mathbb{M}_n(\mathbb{C})^{\otimes l})^{S_l})_+$. Thus, there exists a convergence subsequence of $(x_{l, k})_{k\geq l}$, and $x_{l, \infty}$ denotes its limit. By applying this argument inductively, we obtain a sequence $(x_{l, \infty})_{l=2}^\infty$ of $l$-sub-extensions of $a$ such that $(\mathrm{id}_m\otimes \mathrm{id}_n^{\otimes l}\otimes\rho)(x_{l+1, \infty})\leq x_{l, \infty}$ for all $l\geq 2$. Namely, we have $(x_{l, \infty})_{l=0}^\infty \in K_{m, \rho}$, where $x_{1, \infty}=a$ and $x_{0, \infty}=(\mathrm{id}_m\otimes \rho)(a)$. By Lemma \ref{lem:choquet_convex1} and Theorem \ref{thm:ext_pt1}, $a$ can be approximated by convex combinations of $\mathbb{M}_m(\mathbb{C})_+\otimes \mathbb{M}_n(\mathbb{C})_+$, and hence $a$ is separable.
\end{proof}

%%%%%%%%%%%%%%%%%%%%%%%%%%%%%%%%%%%%%%%%%%%%%%%%%%%%%%%%%%%%%%%%%%%%%%%%%%%%%%%%%%%%%%%%%%%%%%%%%%%%%%%%%%%%%%%%%%%%%%%%%%%%%%%%%%%%%%%%%%%%%%%%%%%%%%%%%%%%%%%%%%%%
\section{Matrix-valued Biane--Choquet--Deny theorem}\label{sec:BCD}
%%%%%%%%%%%%%%%%%%%%%%%%%%%%%%%%%%%%%%%%%%%%%%%%%%%%%%%%%%%%%%%%%%%%%%%%%%%%%%%%%%%%%%%%%%%%%%%%%%%%%%%%%%%%%%%%%%%%%%%%%%%%%%%%%%%%%%%%%%%%%%%%%%%%%%%%%%%%%%%%%%%%
\subsection{$*$-Algebras from compact groups}\label{sec:alg_cpt_groups}
We fix a compact group $G$ and introduce three $*$-algebras $C(G)$, $\mathbb{C}[G]$, and $\mathcal{U}(G)$. The notations in this section follow the standard text \cite{NT:book} on compact quantum groups. Let $C(G)$ denote the $*$-algebra of continuous functions on $G$. We denote by $\mathbb{C}[G]\subset C(G)$ the linear span of all \emph{matrix coefficients} of finite-dimensional unitary representations. Here, by matrix coefficients, we mean continuous functions on $G$ given as $g\in G\mapsto \langle \pi(g)\xi, \eta\rangle$ for any (finite-dimensional) unitary representation $(\pi, \mathcal{H})$ of $G$ and two vectors $\xi, \eta\in \mathcal{H}$. By the definitions of tensor product and contragredient representations, $\mathbb{C}[G]$ becomes a $*$-subalgebra of $C(G)$. Moreover, $\mathbb{C}[G]$ is a Hopf $*$-algebra whose comultiplication $\Delta_G$, antipode $S_G$, and counit $\epsilon_G$ are given as follows:
\begin{itemize}
    \item the comultiplication $\Delta_G\colon \mathbb{C}[G]\to \mathbb{C}[G]\otimes \mathbb{C}[G]\subset C(G\times G)$ is the $*$-homomorphism given by $\Delta_G(f)(s, t):=f(st)$ for any $f\in \mathbb{C}[G]$ and $s,t\in G$
    \item the antipode $S_G\colon \mathbb{C}[G]\to \mathbb{C}[G]$ is the anti-homomorphism given by $S_G(f)(s):=f(s^{-1})$ for any $f\in \mathbb{C}[G]$ and $s\in G$
    \item the counit $\epsilon_G\colon \mathbb{C}[G]\to \mathbb{C}$ is the $*$-homomorphism given by $\epsilon_G(f):=f(e)$ for any $f\in \mathbb{C}[G]$, where $e\in G$ is the identity element.
\end{itemize}

For any $l\geq 1$ let us consider the (algebraic) dual space $\mathcal{U}(G^l):=(\mathbb{C}[G]^{\otimes l})^*$. We denote by $(\,\cdot\,,\,\cdot\,)$ the natural dual pairing between $\mathcal{U}(G^l)$ and $\mathbb{C}[G]^{\otimes l}$. If $l=1$, we simply write $\mathcal{U}(G)$. By the Hopf $*$-algebra structure of $\mathbb{C}[G]$, the linear space $\mathcal{U}(G)$ also becomes a unital $*$-algebra. More explicitly, the multiplication and the $*$-operation are given as
\[(xy, f):=(x\otimes y, \Delta_G(f)), \quad (x^*, f):=\overline{(x, S_G(f)^*)}\quad (x, y\in \mathcal{U}(G), \, f\in \mathbb{C}[G]).\]
Then, $\epsilon_G$ becomes a multiplicative unit in $\mathcal{U}(G)$. In general, $\mathcal{U}(G)$ is not a Hopf $*$-algebra. However, a comultiplication-like unital $*$-homomorphism $\hat\Delta_G\colon \mathcal{U}(G)\to \mathcal{U}(G^2)$ is defined by
\[(\hat\Delta_G(x), f\otimes g):=(x, fg) \quad (x\in \mathcal{U}(G), \, f, g\in \mathbb{C}[G]).\]
We remark that $\mathcal{U}(G)\otimes \mathcal{U}(G)$ can be naturally embedded into $\mathcal{U}(G^2)$, but $\mathcal{U}(G^2)$ is strictly larger than $\mathcal{U}(G)\otimes \mathcal{U}(G)$ in general. Similarly, two unital $*$-homomorphisms $\hat\Delta_G\otimes \mathrm{id}$ and $\mathrm{id}\otimes \hat\Delta_G$ from $\mathcal{U}(G^2)$ to $\mathcal{U}(G^3)$ are defined by
\[((\hat\Delta_G\otimes \mathrm{id})(x), f\otimes g\otimes h):=(x, fg\otimes h), \quad ((\mathrm{id}\otimes \hat\Delta_G)(x), f\otimes g\otimes h):=(x, f\otimes gh)\]
for any $x\in \mathcal{U}(G^2)$ and $f, g, h\in \mathbb{C}[G]$. By the associativity of the multiplication of $\mathbb{C}[G]$, we have $(\hat\Delta_G\otimes \mathrm{id})\circ \hat\Delta_G=(\mathrm{id}\otimes \hat\Delta_G)\circ \hat\Delta_G$, and this is denoted by $\hat\Delta_G^{(2)}$. Inductively, we also obtain a unital $*$-homomorphism $\hat\Delta_G^{(l)}\colon \mathcal{U}(G)\to \mathcal{U}(G^l)$ for any $l\geq 2$.

Let $(\pi, \mathcal{H})$ be a finite-dimensional unitary representation of $G$. For any $\xi, \eta\in \mathcal{H}$ we denote by $c^\pi_{\xi, \eta}$ the corresponding matrix coefficient, that is, $c^\pi_{\xi, \eta}(t):=\langle \pi(t)\xi, \eta\rangle$. For any $x\in \mathcal{U}(G)$ we define $\pi(x)\in B(\mathcal{H})$ as the unique linear operator satisfying $\langle \pi(x)\xi, \eta\rangle=(x, c^\pi_{\xi, \eta})$. Then, it is easy to show that the mapping $x\in \mathcal{U}(G)\mapsto \pi(x)\in B(\mathcal{H})$ is a unital $*$-homomorphism. We remark that $\pi(e_t)=\pi(t)$ for any $t\in G$, where $e_t\in\mathcal{U}(G)$ is the evaluation map at $t$, that is, $(e_t, f):=f(t)$ for any $f\in \mathbb{C}[G]$. By the definition of the $*$-operation on $\mathcal{U}(G)$, we have $e_t^*=e_{t^{-1}}$ for any $t\in G$. Thus, any $*$-representation of $\mathcal{U}(G)$ produces a unitary representation of $G$. Similarly, for any two unitary representations $(\pi, \mathcal{H})$ and $(\lambda, \mathcal{K})$ of $G$ we obtain a unital $*$-homomorphism $\pi\otimes \lambda\colon \mathcal{U}(G^2)\to B(\mathcal{H}\otimes \mathcal{K})$ by $\langle (\pi\otimes \lambda)(x)\xi_1\otimes \xi_2, \eta_1\otimes \eta_2\rangle = (x, c^\pi_{\xi_1, \eta_1}\otimes c^\lambda_{\xi_2, \eta_2})$ for any vectors $\xi_1, \eta_1\in \mathcal{H}$ and $\xi_2, \eta_2\in \mathcal{K}$. Namely, the representation of $\mathcal{U}(G)$ given as $(\pi\otimes \lambda)\circ \hat \Delta_G$ corresponds to the tensor product representation $(\pi\otimes \lambda, \mathcal{H}\otimes \mathcal{K})$ of $G$. In what follows, $(\pi\otimes \lambda)\circ\hat\Delta_G$ is simply denoted by $\pi\otimes \lambda$.

Let $\widehat G$ denote the set of all equivalence classes of irreducible representations of $G$, and it is always assumed to be countable. Throughout the paper, we fix representatives $((\pi_\alpha, \mathcal{H}_\alpha))_{\alpha\in \widehat G}$ of all irreducible representations. By the Peter--Weyl theorem, $\mathcal{U}(G)$ is isomorphic to $\prod_{\alpha\in \widehat G}B(\mathcal{H}_\alpha)$ as a $*$-algebra, and  the isomorphism is given by $\Pi_G(x):=(\pi_\alpha(x))_{\alpha\in \widehat G}$. In what follows, we freely identify $\mathcal{U}(G)$ and $\prod_{\alpha\in \widehat G}B(\mathcal{H}_\alpha)$.

%%%%%%%%%%%%%%%%%%%%%%%%%%%%%%%%%%%%%%%%%%%%%%%%%%%%%%%%%%%%%%%%%%%%%%%%%%%%%%%%%%%%%%%%%%%%%%%%%%%%%%%%%%%%%%%%%%%%%%%%%%%%%%%%%%%%%%%%%%%%%%%%%%%%%%%%%%%%%%%%%%%%
\subsection{Matrix-valued Martin boundary on the dual of compact groups}
First, we mention the notion of quantum random walks or random walks on non-commutative spaces. See \cite{Biane08} and references therein for more details. From this standpoint, a pair consisting of a unital $*$-algebra and a state is interpreted as a non-commutative analog of a probability space. More precisely, a unital $*$-algebra serves as an algebra of random variables, while a state represents a linear functional obtained through expectation. Thus, a state is a non-commutative counterpart of a probability measure. Our results in this section concern the non-commutative probability space from $\mathcal{U}(G)$. See also the comment below \eqref{eq:subharmonic}.

We remark that $x\in \mathcal{U}(G)_+$ if and only if $\pi(x)\in B(\mathcal{H})_+$ for any finite-dimensional unitary representation $(\pi, \mathcal{H})$ of $G$. We also say that $\rho\in \mathcal{U}(G)^*$ is \emph{positive} if $\rho(x)\geq0$ for any $x\in \mathcal{U}(G)_+$. By the dual pairing between $\mathcal{U}(G)$ and $\mathbb{C}[G]$, every $f\in \mathbb{C}[G]$ is naturally identified with $\widehat f\in \mathcal{U}(G)^*$ given by $\widehat f(x):=(x, f)$ for any $x\in \mathcal{U}(G)$. We denote by $\mathcal{U}(G)_*$ the image of this identification, i.e., $\mathcal{U}(G)_*:=\{\widehat f\mid f\in\mathbb{C}[G]\}$ and $\mathcal{U}(G)_{*, +}:=\mathcal{U}(G)_*\cap \mathcal{U}(G)^*_+$. We will use the following notation:

\begin{definition}
    For any $\rho\in \mathcal{U}(G)_*$ we define $f_\rho\in \mathbb{C}[G]$ by $\widehat{f_\rho}=\rho$.
\end{definition}

For any finite-dimensional unitary representation $(\pi, \mathcal{K})$ of $G$ and any $\xi\in \mathcal{K}$, let $\omega_\xi$ denote the positive linear form on $B(\mathcal{K})$ given by $\omega_\xi(x):=\langle x\xi, \xi\rangle$ for any $x\in B(\mathcal{K})$. Then $\omega_\xi\circ\pi\in \mathcal{U}(G)_{*, +}$ and moreover any element $\rho\in \mathcal{U}(G)_{*, +}$ has this form. Let $\rho\in \mathcal{U}(G)_{*, +}$ be given. Note first that by Schur orthogonality relations, $\rho$ is finitely supported, i.e., $\rho|_{B(\mathcal{H}_\alpha)}=0$ holds for all but finitely many $\alpha\in \widehat G$. For $\rho|_{B(\mathcal{H}_\alpha)}$, let $(\sigma_\alpha, \mathcal{K}_\alpha)$ be the GNS representation of $B(\mathcal{H}_\alpha)$ and let $\xi_\alpha\in \mathcal{K}_\alpha$ such that $\rho|_{B(\mathcal{H}_\alpha)}(x)=\langle \sigma_\alpha(x)\xi_\alpha, \xi_\alpha\rangle$ for any $x\in B(\mathcal{H}_\alpha)$. Note that $\mathcal{K}_\alpha$ is finite-dimensional, and $\mathcal{K}_\alpha=\{0\}$ for all but finitely many $\alpha\in \widehat G$. Let $\sigma$ be the direct sum
\[\sigma:=\bigoplus_{\alpha\in \widehat G}\sigma_\alpha\colon \mathcal{U}(G)\to \bigoplus_{\alpha\in\widehat G}B(\mathcal{K}_\alpha).\]
It is a finite-dimensional representation, and we have $\rho(x)=\langle \sigma(x)\xi, \xi\rangle$ for any $x\in \mathcal{U}(G)$, where $\xi:=\sum_{\alpha\in\widehat G}\xi_\alpha$.

For any $\rho\in \mathcal{U}(G)_*$ we define a linear map $P_\rho\colon \mathcal{U}(G)\to \mathcal{U}(G)$ by ``$P_\rho:=(\mathrm{id}\otimes \rho)\circ \Delta_G$''. The precise definition is as follows:
\begin{definition}\label{def:4.2}
    For any $\rho\in \mathcal{U}(G)_*$ we define a linear map $P_\rho\colon \mathcal{U}(G)\to \mathcal{U}(G)$ by
    \[(P_\rho(x), f):=(\hat\Delta_G(x), f\otimes f_\rho)=(x, ff_\rho)\quad (x\in \mathcal{U}(G), \, f\in \mathbb{C}[G]).\]
    Its matrix-valued version is also defined as $P_{m, \rho}:=\mathrm{id}_m\otimes \rho$ on $\mathbb{M}_m(\mathbb{C})\otimes \mathcal{U}(G)$.
\end{definition}

For any $\rho, \nu\in \mathcal{U}(G)_*$ the \emph{convolution} $\rho*\nu\in \mathcal{U}(G)_*$ is defined as $\rho*\nu:=\rho\circ P_\nu$. It is easy to check $\rho*\nu=(\rho\otimes \nu)\circ\hat \Delta_G$, where $\rho\otimes \nu$ is a linear functional naturally defined on $\mathcal{U}(G^2)$ for $\rho, \nu\in\mathcal{U}(G)_*$. Since $\mathbb{C}[G]$ is commutative, we have $\rho*\nu=\nu*\rho$, and equivalently, $P_\rho\circ P_\nu=P_\nu\circ P_\rho$ holds. We also define $\rho^{*l}:=\rho\circ P_\rho^{l-1}$ for any $l\geq 1$ and $\rho^{*0}:=\epsilon_G$.

\begin{proposition}\label{prop:cp}
    For any $\rho\in \mathcal{U}(G)_{*, +}$ the linear map $P_\rho$ is completely positive, that is, $P_{m, \rho}$ is positivity-preserving for all $m\geq0$.
\end{proposition}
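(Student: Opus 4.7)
The plan is to exhibit $P_\rho$ as a composition of completely positive building blocks, using the structural description of elements of $\mathcal{U}(G)_{*, +}$ recalled earlier in this subsection. Write $\rho = \omega_\xi \circ \sigma$, where $(\sigma, \mathcal{K})$ is a finite-dimensional unitary representation of $G$ and $\xi \in \mathcal{K}$; then $f_\rho = c^\sigma_{\xi, \xi}$.

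The key identity to establish is: for every finite-dimensional unitary representation $(\lambda, \mathcal{H}_\lambda)$ of $G$ and every $x \in \mathcal{U}(G)$,
\[
\lambda(P_\rho(x)) = (\mathrm{id}_{B(\mathcal{H}_\lambda)} \otimes \omega_\xi)\bigl((\lambda \otimes \sigma)(\hat\Delta_G(x))\bigr).
\]
I would verify this by pairing with vectors $\eta, \zeta \in \mathcal{H}_\lambda$: the left-hand side evaluates to $(x, c^\lambda_{\eta, \zeta} \cdot f_\rho)$ by the definition of $P_\rho$, while the right-hand side evaluates to $(x, c^{\lambda \otimes \sigma}_{\eta \otimes \xi, \zeta \otimes \xi})$. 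These coincide because the pointwise product of matrix coefficients is a matrix coefficient of the tensor product representation, namely $c^\lambda_{\eta, \zeta}(t) \cdot c^\sigma_{\xi, \xi}(t) = c^{\lambda \otimes \sigma}_{\eta \otimes \xi, \zeta \otimes \xi}(t)$ for all $t \in G$.

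With this identity in hand, complete positivity follows by matrix amplification. For $X \in (\mathbb{M}_m(\mathbb{C}) \otimes \mathcal{U}(G))_+$ and any finite-dimensional unitary representation $\lambda$ of $G$,
\[
(\mathrm{id}_m \otimes \lambda)(P_{m, \rho}(X)) = (\mathrm{id}_m \otimes \mathrm{id}_{B(\mathcal{H}_\lambda)} \otimes \omega_\xi) \circ (\mathrm{id}_m \otimes (\lambda \otimes \sigma)) \circ (\mathrm{id}_m \otimes \hat\Delta_G)(X).
\]
The map $\mathrm{id}_m \otimes \hat\Delta_G$ is a $*$-homomorphism, $\mathrm{id}_m \otimes (\lambda \otimes \sigma)$ is a $*$-homomorphism, and slicing by the positive functional $\omega_\xi$ is completely positive (by the standard Stinespring factorization $y \mapsto V^* y V$ with $V\colon \mathcal{H}_\lambda \to \mathcal{H}_\lambda \otimes \mathcal{K}$, $h \mapsto h \otimes \xi$). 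Hence the composition preserves positivity and lands in $(\mathbb{M}_m(\mathbb{C}) \otimes B(\mathcal{H}_\lambda))_+$. Specializing $\lambda$ to each irreducible representation $\pi_\alpha$ and invoking that positivity in $\mathbb{M}_m(\mathbb{C}) \otimes \mathcal{U}(G) = \prod_{\alpha \in \widehat G} \mathbb{M}_m(\mathbb{C}) \otimes B(\mathcal{H}_\alpha)$ is detected componentwise, we conclude that $P_{m, \rho}(X)$ is positive.

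The only substantive step is verifying the key identity; everything else is a routine composition of manifestly positive maps, so I do not anticipate a real obstacle beyond choosing clean notation for the amplified slice.
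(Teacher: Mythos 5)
Your proof is correct and follows essentially the same route as the paper's: decompose $\rho$ as $\omega_\xi\circ\sigma$ for a finite-dimensional unitary representation, establish the identity $(\mathrm{id}_m\otimes\lambda)(P_{m,\rho}(x))=(\mathrm{id}_m\otimes\mathrm{id}_{B(\mathcal{H}_\lambda)}\otimes\omega_\xi)\bigl((\mathrm{id}_m\otimes(\lambda\otimes\sigma))(x)\bigr)$, and conclude by testing positivity against all finite-dimensional representations. You merely spell out the pairing computation and the Stinespring factorization of the slice map, which the paper leaves implicit.
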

\begin{proof}
    We may assume that $\rho=\omega_\xi\circ\pi$ for some finite-dimensional unitary representation $(\pi, \mathcal{K})$ of $G$ and $\xi\in \mathcal{K}$. Let $x\in \mathbb{M}_m(\mathbb{C})\otimes \mathcal{U}(G)$ be positive. Our goal is to show $P_{m, \rho}(x)\geq 0$. For any finite-dimensional unitary representation $(\lambda, \mathcal{H})$ of $G$ we have
    \[(\mathrm{id}_m\otimes \lambda)(P_{m, \rho}(x))=(\mathrm{id}_m\otimes \mathrm{id}_{B(\mathcal{H})}\otimes \omega_\xi)(\mathrm{id}_m\otimes \lambda\otimes \pi)(x)\geq 0,\]
    that is, $P_{m, \rho}(x)\geq0$.
\end{proof}

We define
\begin{equation}\label{eq:subharmonic}
    B_{m, \rho}:=\{x\in (\mathbb{M}_m(\mathbb{C})\otimes \mathcal{U}(G))_+ \mid P_{m, \rho}(x)\leq x\}.
\end{equation}
In the viewpoint of noncommutative probability theory, $P_\rho$ is regarded as the transition operator of a Markov process. Thus, $B_{m, \rho}$ can be naturally interpreted as the space of $\mathbb{M}_m(\mathbb{C})$-valued $P_\rho$-subharmonic functions. In the paper, an element in $B_{m, \rho}$ is called \emph{$P_{m, \rho}$-subharmonic}. If $P_{m, \rho}(x)=x$, then $x$ is called \emph{$P_{m, \rho}$-harmonic}.

We equip $\mathbb{M}_m(\mathbb{C})\otimes \mathcal{U}(G)$ with the weak${}^*$-topology induced by $\mathbb{M}_m(\mathbb{C})\otimes \mathbb{C}[G]$. Here, $\mathbb{M}_m(\mathbb{C})$ is identified with its dual space by the Hilbert--Schmidt inner product. By $\mathrm{id}_m\otimes \Pi_G$, we have $\mathbb{M}_m(\mathbb{C})\otimes \mathcal{U}(G)\cong \prod_{\alpha \in \widehat G}\mathbb{M}_m(\mathbb{C})\otimes B(\mathcal{H}_\alpha)$, where the right-hand side is endowed with the product topology of the weak${}^*$ topologies, and this is equivalent to the topology of $\mathbb{M}_m(\mathbb{C})\otimes \mathcal{U}(G)$.

By a proof as in Lemma \ref{lem:choquet_convex1}, we have
\begin{lemma}\label{lem:choquet_convex2}
    $B_{m, \rho}$ is the closed convex hull of $\mathrm{exr}(B_{m, \rho})$.
\end{lemma}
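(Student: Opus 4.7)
The plan is to mimic the proof of Lemma~\ref{lem:choquet_convex1}, producing a family of caps $\{C_r\}_{r>0}$ that covers $B_{m,\rho}$ and then invoking the Choquet theorem \cite[p.81]{Phelps:book} exactly as before. Only the compactness estimate genuinely needs adjustment: the parameter $l$, which in Lemma~\ref{lem:choquet_convex1} indexed the components of a sub-martingale, here plays the role of the iterate number of the transition operator $P_{m,\rho}$. The natural replacement for ``$\mathrm{Tr}_m(x_0)$'' is
\[
\phi(x) := \mathrm{Tr}_m\bigl(\pi_0(x)\bigr), \qquad x \in \mathbb{M}_m(\mathbb{C}) \otimes \mathcal{U}(G),
\]
where $\pi_0$ denotes the trivial irreducible representation of $G$ (equivalently, pairing against the constant function $1 \in \mathbb{C}[G]$). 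This $\phi$ is weak${}^*$-continuous, nonnegative on $B_{m,\rho}$, and everywhere finite, so the sets $C_r := \{x \in B_{m,\rho} \mid \phi(x) \le r\}$ are convex with convex complements in $B_{m,\rho}$ and satisfy $B_{m,\rho} = \bigcup_{r>0} C_r$. Once each $C_r$ is shown to be compact, \cite[Proposition 13.2]{Phelps:book} lifts it to a cap of $B_{m,\rho}$, and the argument concludes exactly as in Lemma~\ref{lem:choquet_convex1}.

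For compactness I would iterate the subharmonicity inequality: $P_{m,\rho}^{\,l}(x) \le x$ for every $l \ge 0$, and so
\[
(\mathrm{Tr}_m \otimes \widehat{f_\rho^{\,l}})(x) \;=\; (\mathrm{Tr}_m \otimes \pi_0)\bigl(P_{m,\rho}^{\,l}(x)\bigr) \;\le\; \phi(x) \;\le\; r
\]
for every $x \in C_r$ and every $l \ge 0$, which is the precise analog of the bound $(\mathrm{Tr}_m \otimes \rho^{\otimes l})(x_l) \le \mathrm{Tr}_m(x_0)$ of Lemma~\ref{lem:choquet_convex1}. Because $f_\rho^{\,l}$ is a matrix coefficient of $\sigma^{\otimes l}$, for the GNS-type representation $\sigma$ associated with $\rho$ in Section~\ref{sec:alg_cpt_groups}, and $\sigma^{\otimes l}$ decomposes into irreducibles as $\bigoplus_\alpha N_\alpha^{(l)}\pi_\alpha$, the scalar inequality above splits, by positivity of $x$, into nonnegative contributions from each $\alpha$. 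Under the identification $\mathbb{M}_m(\mathbb{C}) \otimes \mathcal{U}(G) \cong \prod_\alpha \mathbb{M}_m(\mathbb{C}) \otimes B(\mathcal{H}_\alpha)$ each block is finite-dimensional, and Tychonoff's theorem then gives compactness of $C_r$ as soon as one obtains a uniform norm bound on $(\mathrm{id}_m \otimes \pi_\alpha)(x)$ for every $\alpha$.

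The main obstacle I expect is exactly this last propagation step: turning the family of scalar bounds on $(\mathrm{Tr}_m \otimes \widehat{f_\rho^{\,l}})(x)$ into honest norm bounds on the individual Fourier blocks $(\mathrm{id}_m \otimes \pi_\alpha)(x)$. The mechanism is that, as $l$ varies, the vectors arising from the decomposition of $\zeta^{\otimes l}$ across the irreducible summands of $\sigma^{\otimes l}$ must jointly span $\mathcal{H}_\alpha$; this is the standard ``generating'' requirement on $\rho$ that is implicit in the Biane--Izumi framework, and once it is in force the argument of Lemma~\ref{lem:choquet_convex1} transposes verbatim, with the Choquet theorem then delivering $B_{m,\rho} = \overline{\mathrm{co}}(\mathrm{exr}(B_{m,\rho}))$.
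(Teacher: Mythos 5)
Your plan has a genuine gap: the compactness of $C_r=\{x\in B_{m,\rho}\mid \mathrm{Tr}_m(\pi_0(x))\le r\}$ simply fails for general $\rho\in\mathcal{U}(G)_{*,+}$, and the lemma is stated (and, per the remark following Theorem \ref{thm:BCD}, intended to hold) without any generating hypothesis, which is only imposed later for Theorem \ref{thm:BCD} itself. A single scalar functional supported on the trivial block cannot control the other Fourier blocks: for instance, if $f_\rho$ is a constant $c\le 1$, then $P_{m,\rho}=c\,\mathrm{id}$ and $B_{m,\rho}$ is the whole positive cone, so your $C_r$ leaves every block $\mathbb{M}_m(\mathbb{C})\otimes B(\mathcal{H}_\alpha)$ with $\alpha$ nontrivial completely unconstrained and is not compact. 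You correctly identify the propagation from the scalar bounds $(\mathrm{Tr}_m\otimes\rho^{*l})(x)\le r$ to normwise bounds on each $(\mathrm{id}_m\otimes\pi_\alpha)(x)$ as the main obstacle, but the fix you propose (the generating condition, which indeed would give $\mathrm{Tr}_\alpha\circ\pi_\alpha\le\sum_l a_l\rho^{*l}$ and hence the required block bounds) proves a strictly weaker statement than the lemma claims.

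The paper avoids the issue by not using a single cap exhaustion $\{C_r\}_{r>0}$ at all. Instead, for every sequence $c=(c_\alpha)_{\alpha\in\widehat G}\in\mathbb{R}_{>0}^{\widehat G}$ it forms the lower semicontinuous, additive, positively homogeneous functional
\[
\varphi_c(x)=\sum_{\alpha\in\widehat G}c_\alpha\,(\mathrm{Tr}_m\otimes\mathrm{Tr}_\alpha\circ\pi_\alpha)(x),
\]
whose sublevel set $C_{\varphi_c}=\{x\in B_{m,\rho}\mid\varphi_c(x)\le 1\}$ bounds \emph{every} block by $c_\alpha^{-1}$ by construction, hence is compact by Tychonoff, hence is a cap by \cite[Proposition 13.2]{Phelps:book}. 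Since each block of a fixed positive $x$ has finite trace, one can always choose $c$ decaying fast enough that $\varphi_c(x)\le 1$, so $B_{m,\rho}=\bigcup_c C_{\varphi_c}$ and the Choquet theorem applies with no dynamical input from $P_{m,\rho}$ whatsoever. If you want to salvage your argument as written, you must either restrict to generating $\rho$ (weakening the lemma) or replace your single $\phi$ by such a family of weights indexed over $\widehat G$.
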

\begin{proof}
    For any $c=(c_\alpha)_{\alpha\in \widehat G}\in \mathbb{R}_{>0}^{\widehat G}$ we define $\varphi_c\colon (\mathbb{M}_m(\mathbb{C})\otimes \mathcal{U}(G))_+\to [0, \infty]$ by
    \[\varphi_c(x)=\sum_{\alpha \in \widehat G}c_\alpha (\mathrm{Tr}_m\otimes \mathrm{Tr}_\alpha\circ \pi_\alpha)(x),\]
    where $\mathrm{Tr}_\alpha$ is the non-normalized trace on $B(\mathcal{H}_\alpha)$. Then, $\varphi_c$ is lower semi-continuous, additive, and positive-homogeneous. Moreover, $C_{\varphi_c}:=\{x\in B_{m, \rho}\mid \varphi_c(x)\leq 1\}$ is compact since
    \[C_{\varphi_c}\subset \{x\in \mathbb{M}_m(\mathbb{C})\otimes \mathcal{U}(G)\mid (\mathrm{Tr}_m\otimes \mathrm{Tr}_\alpha\circ\pi_\alpha)(|x|)<c_\alpha^{-1}\text{ for any }\alpha\in \widehat G\}.\]
    Thus, $C_{\varphi_c}$ is a cap of $B_{m, \rho}$ (see \cite[Proposition 13.2]{Phelps:book}). Thus, the claim follows from \cite[Chqoeut theorem in p.81]{Phelps:book} since $B_{m, \rho}=\bigcup_{c\in \mathbb{R}_{>0}^{\widehat G} }C_{\varphi_c}$.
\end{proof}

As in Lemma \ref{lem:choquet_convex1}, we showed that $B_{m, \rho}$ is a union of its caps in the above proof. Thus, by the same proof of Proposition \ref{prop:int_rep}, we obtain the following representation theorem:
\begin{proposition}\label{prop:int_rep2}
    Every $x\in B_{m, \rho}\backslash\{0\}$ can be represented by a Borel probability measure $M_x$ on $B_{m, \rho}$ such that $M_x$ is supported on any closed subset of $B_{m, \rho}$ which contains $\mathrm{exr}(B_{m, \rho})$.
\end{proposition}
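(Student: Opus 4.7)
The plan is to follow exactly the blueprint of Proposition \ref{prop:int_rep}, substituting the family of caps $\{C_{\varphi_c}\}_{c \in \mathbb{R}_{>0}^{\widehat G}}$ constructed inside the proof of Lemma \ref{lem:choquet_convex2} for the caps $\{C_r\}_{r>0}$ used in Lemma \ref{lem:choquet_convex1}. The only real verification needed beyond what is already recorded there is that every element of $B_{m,\rho}$ lies in \emph{some} such cap; after this, the Choquet--Bishop--de Leeuw machinery applies verbatim.

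First, given $x \in B_{m,\rho} \setminus \{0\}$, I would choose $c = (c_\alpha)_{\alpha \in \widehat G} \in \mathbb{R}_{>0}^{\widehat G}$ making $\varphi_c(x) \leq 1$. Since $\widehat G$ is countable, enumerate it as $(\alpha_k)_{k \geq 1}$ and put, for instance,
\[
c_{\alpha_k} := \frac{2^{-k}}{1 + (\mathrm{Tr}_m \otimes \mathrm{Tr}_{\alpha_k} \circ \pi_{\alpha_k})(x)};
\]
then $\varphi_c(x) \leq \sum_{k \geq 1} 2^{-k} = 1$, so $x \in C_{\varphi_c}$. By the proof of Lemma \ref{lem:choquet_convex2}, $C_{\varphi_c}$ is a cap of $B_{m,\rho}$ (compact, convex, with convex complement, via \cite[Proposition 13.2]{Phelps:book}). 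This is the analogue of the step ``$x \in C_r$'' from the proof of Proposition \ref{prop:int_rep}.

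Next I would apply the Choquet--Bishop--de Leeuw theorem (\cite[Section 4]{Phelps:book}) to the compact convex set $C_{\varphi_c}$, which sits inside the locally convex space $\mathbb{M}_m(\mathbb{C}) \otimes \mathcal{U}(G)$ equipped with the weak${}^*$ topology. This yields a Borel probability measure $M_x$ on $C_{\varphi_c}$ representing $x$ and vanishing on every Baire subset of $C_{\varphi_c} \setminus \mathrm{ex}(C_{\varphi_c})$. Replacing $M_x$ by a $\prec$-maximal measure dominating it (still representing $x$), I may further arrange that $M_x$ is supported on every closed subset of $C_{\varphi_c}$ containing $\mathrm{ex}(C_{\varphi_c})$.

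Finally, to pass from $C_{\varphi_c}$ to $B_{m,\rho}$, I invoke \cite[Proposition 13.1]{Phelps:book}, which says that the extreme points of a cap of a closed convex cone lie on extreme rays of the cone; consequently $\mathrm{ex}(C_{\varphi_c}) \subseteq \mathrm{exr}(B_{m,\rho}) \cup \{0\}$, and any closed subset of $B_{m,\rho}$ containing $\mathrm{exr}(B_{m,\rho})$ meets $C_{\varphi_c}$ in a closed subset containing $\mathrm{ex}(C_{\varphi_c})$. Thus $M_x$, viewed as a measure on $B_{m,\rho}$ via the inclusion $C_{\varphi_c} \hookrightarrow B_{m,\rho}$, has the advertised support property. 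The only mildly delicate point is the first one, the cap-containment, because $\mathcal{U}(G) = \prod_{\alpha \in \widehat G} B(\mathcal{H}_\alpha)$ is not finite-dimensional and no single cap of ``bounded total mass'' type exhausts $B_{m,\rho}$; this is precisely why the caps are indexed by the infinite-dimensional parameter $c \in \mathbb{R}_{>0}^{\widehat G}$, and why countability of $\widehat G$ is essential to produce an admissible $c$.
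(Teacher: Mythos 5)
Your proposal is correct and follows essentially the same route as the paper: the paper's proof simply notes that $B_{m,\rho}=\bigcup_{c}C_{\varphi_c}$ is a union of caps (from the proof of Lemma \ref{lem:choquet_convex2}) and then repeats the argument of Proposition \ref{prop:int_rep} verbatim, i.e.\ Choquet--Bishop--de Leeuw on a cap containing $x$, $\prec$-maximality, and \cite[Proposition 13.1]{Phelps:book} to pass from $\mathrm{ex}(C_{\varphi_c})$ to $\mathrm{exr}(B_{m,\rho})$. Your explicit choice of $c_{\alpha_k}$ usefully spells out the cap-containment step that the paper leaves implicit.
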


We now study the extreme points $\mathrm{exr}(B_{m, \rho})$ of $B_{m, \rho}$ and obtain a matrix-valued extension of a theorem by Biane \cite{Biane92} that is itself a non-commutative version of the Choquet--Deny theorem.

Let us introduce the following notions. A non-zero element $e\in \mathcal{U}(G)$ is said to be \emph{group-like} (or \emph{primitive}) if $\hat\Delta_G(e)=e\otimes e$ holds. We denote by $\mathcal{G}(G)$ the subset of all group-like elements of $\mathcal{U}(G)$. We also define by $\mathcal{E}(G):=\mathcal{G}(G)\cap \mathcal{U}(G)_+$, the subset of all \emph{exponential} elements.
Then for any $e\in \mathcal{E}(G)$ and $f\in \mathbb{C}[G]$ we have that:
\[(P_\rho(e), f)=(\hat\Delta_G(e), f\otimes f_\rho)=\rho(e)(e, f).\]
Thus, an exponential element $e\in \mathcal{E}(G)$ is $P_\rho$-subharmonic if and only if $\rho(e)\leq 1$. If $\mathcal{E}_\rho(G)$ denotes the intersection $\mathcal{E}(G)\cap B_{1, \rho}$, we then have
\[\mathcal{E}_\rho(G)=\{e\in \mathcal{E}(G)\mid \rho(e)\leq 1\}.\]

We assume that $\rho\in \mathcal{U}(G)_{*, +}$ is \emph{generating}, i.e., for every $\nu\in \mathcal{U}(G)_{*, +}$ we have $\nu\leq \sum_{l=0}^ka_l \rho^{*l}$ for some $a_l\geq 0$ ($l=0, \dots, k$) and $k\geq0$.
\begin{theorem}[Matrix-valued Biane--Choquet--Deny Theorem]\label{thm:BCD}
    The set $\mathrm{exr}(B_{m, \rho})$ of extreme points of $B_{m, \rho}$ is contained in $\mathbb{M}_m(\mathbb{C})_+\otimes \mathcal{E}_\rho(G):=\{a\otimes e\mid a\in \mathbb{M}_m(\mathbb{C})_+,\, e\in \mathcal{E}_\rho(G)\}$. In particular, $B_{m, \rho}$ is the closed convex hull of $\mathbb{M}_m(\mathbb{C})_+\otimes \mathcal{E}_\rho(G)$.
\end{theorem}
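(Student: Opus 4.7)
My plan is to mimic the strategy of Theorem~\ref{thm:ext_pt1}, with the Hopf-algebraic duality between $\mathcal{U}(G)$ and $\mathbb{C}[G]$ replacing the role of the symmetric-tensor structure. Fix $x \in \mathrm{exr}(B_{m,\rho})$. For each $\nu \in \mathcal{U}(G)_{*,+}$, Proposition~\ref{prop:cp} gives that $P_{m,\nu}$ is positivity-preserving, and commutativity of $\mathbb{C}[G]$ yields $P_\rho P_\nu = P_{\rho*\nu} = P_{\nu*\rho} = P_\nu P_\rho$. Consequently
\[P_{m,\rho}(P_{m,\nu}(x)) = P_{m,\nu}(P_{m,\rho}(x)) \leq P_{m,\nu}(x),\]
so $P_{m,\nu}(x) \in B_{m,\rho}$; when moreover $\nu \leq \rho$, positivity of $P_{m,\rho-\nu}$ further gives $P_{m,\nu}(x) \leq P_{m,\rho}(x) \leq x$. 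Once I verify that the decomposition $x = P_{m,\nu}(x) + (x - P_{m,\nu}(x))$ has both summands in $B_{m,\rho}$, extremity of $x$ produces a scalar $b(\nu) \geq 0$ with $P_{m,\nu}(x) = b(\nu)x$.

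I would next extend $b$ to a positive linear functional on $\mathcal{U}(G)_* \cong \mathbb{C}[G]$. Additivity and positive homogeneity of $b$ on the order interval $\{\nu : 0 \leq \nu \leq \rho\}$ are automatic, and for arbitrary $\nu \in \mathcal{U}(G)_{*,+}$ the generating hypothesis yields $\nu \leq \sum_{l=0}^k a_l \rho^{*l}$; since each $P_{m,\rho^{*l}}(x) = P_{m,\rho}^l(x) = b(\rho)^l x$ already lies on $\mathbb{R}_{\geq 0}x$, applying extremity once more (to a sufficiently large scalar multiple of $x$) produces the requisite $b(\nu)$. Jordan decomposition then extends $b$ linearly to $\mathcal{U}(G)_*$, identifying it with an element $e \in \mathcal{U}(G)$ via $b(\nu) = \nu(e) = (e, f_\nu)$; positivity of $b$ on $\mathcal{U}(G)_{*,+}$ gives $e \in \mathcal{U}(G)_+$. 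The convolution identity $P_{m,\nu_1 * \nu_2}(x) = P_{m,\nu_1}P_{m,\nu_2}(x) = b(\nu_1)b(\nu_2)x$ forces $b(\nu_1 * \nu_2) = b(\nu_1)b(\nu_2)$, i.e.\ $(\nu_1 \otimes \nu_2)(\hat\Delta_G(e) - e \otimes e) = 0$ for all $\nu_1, \nu_2 \in \mathcal{U}(G)_*$; as the $\nu_1 \otimes \nu_2$ exhaust $\mathbb{C}[G \times G]$, this yields $\hat\Delta_G(e) = e \otimes e$, so $e \in \mathcal{G}(G) \cap \mathcal{U}(G)_+ = \mathcal{E}(G)$, and $\rho(e) = b(\rho) \leq 1$ places $e$ in $\mathcal{E}_\rho(G)$. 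Finally, setting $a := (\mathrm{id}_m \otimes \epsilon_G)(x) \in \mathbb{M}_m(\mathbb{C})_+$ and applying $\mathrm{id}_m \otimes \epsilon_G$ to $P_{m,\nu}(x) = \nu(e)x$ (using $\epsilon_G \circ P_\nu = \nu$) gives $(\mathrm{id}_m \otimes \nu)(x) = \nu(e)a = (\mathrm{id}_m \otimes \nu)(a \otimes e)$ for all $\nu \in \mathcal{U}(G)_*$; since $\mathbb{C}[G]$ separates points of $\mathcal{U}(G)$, we conclude $x = a \otimes e$. The ``in particular'' statement then follows from Lemma~\ref{lem:choquet_convex2}.

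The main obstacle is verifying that $x - P_{m,\nu}(x) \in B_{m,\rho}$ (subharmonic, not merely $\geq 0$ in the ambient sense) so that extremity legitimately applies to the decomposition in the first paragraph. My plan is to handle this through a Riesz-type decomposition $x = h + p$, with $h := \lim_k P_{m,\rho}^k(x)$ the decreasing weak${}^*$ limit---which exists componentwise in each finite-dimensional $\mathbb{M}_m(\mathbb{C}) \otimes B(\mathcal{H}_\alpha)$---and $p := x - h$. Both $h$ and $p$ lie in $B_{m,\rho}$ (the subharmonicity of $p$ using only $P_{m,\rho}(x) \leq x$), so extremity of $x$ forces $x$ to be either harmonic ($h = x$, equivalently $P_{m,\rho}(x) = x$) or a pure potential ($h = 0$). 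In the harmonic case, $x - P_{m,\nu}(x)$ is itself harmonic and the argument runs unchanged. In the pure-potential case, I would argue analogously through the potential operator $V := \sum_k P_{m,\rho}^k$, writing $x = V(u)$ with $u := x - P_{m,\rho}(x)$ and transferring extremity of $x$ to a rank-one property of the source $u$, from which the same conclusion $x = a \otimes e$ (now with $\rho(e) < 1$) follows.
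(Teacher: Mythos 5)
Your main line---showing $P_{m,\nu}(x)\in B_{m,\rho}$, dominating it by a multiple of $x$ via the generating hypothesis, extracting a linear functional $b$ by Jordan decomposition, identifying it with $e\in\mathcal{U}(G)_+$, and then recovering $a$ by evaluating against the trivial representation---is essentially the paper's argument; your derivation of group-likeness from the multiplicativity $b(\nu_1*\nu_2)=b(\nu_1)b(\nu_2)$ is only a cosmetic variant of the paper's direct computation of $\hat\Delta_G(x_\mu)$. More importantly, you have put your finger on exactly the delicate step: to invoke the extreme-ray property of $x$ inside the cone $B_{m,\rho}$ one must check that the complementary summand $\bigl(\sum_l a_l\bigr)x-P_{m,\nu}(x)$ is itself subharmonic, not merely positive in the ambient order. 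The paper's proof applies extremality directly from the ambient inequality $P_{m,\nu}(x)\le(\sum_l a_l)x$ and does not address this point, so your concern is not a formality.

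Unfortunately, your proposed repair does not close the gap. The Riesz splitting $x=h+p$ and the dichotomy ``harmonic or pure potential'' are fine, and in the harmonic case the argument does go through, since differences of positive harmonic elements are again harmonic, hence in $B_{m,\rho}$. But in the pure-potential case the assertion that ``the same conclusion $x=a\otimes e$ follows'' is not just unproved; it cannot hold. Transferring extremality of $x=V(u)$ to $u$ only shows that $u$ spans an extreme ray of the ambient cone $\prod_\alpha(\mathbb{M}_m(\mathbb{C})\otimes B(\mathcal{H}_\alpha))_+$, i.e.\ $u$ is rank one in a single block; the resulting $x=V(u)$ is a Green-function--type potential, and such elements are genuine extreme rays of $B_{m,\rho}$ that need not have product form. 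Concretely, take $G=U(1)$, $m=1$, $f_\rho=\tfrac14(z+z^{-1})$, which is positive and generating. Identifying $\mathcal{U}(U(1))$ with sequences $(x_n)_{n\in\mathbb{Z}}$, one has $P_\rho(x)_n=\tfrac14(x_{n+1}+x_{n-1})$, and $x_n:=(2-\sqrt3)^{|n|}$ satisfies $P_\rho(x)\le x$ (with equality off $n=0$, and $x-P_\rho(x)$ a multiple of $\delta_0$, which is not subharmonic---precisely your obstruction). This $x$ spans an extreme ray of $B_{1,\rho}$ by the usual Riesz-decomposition argument, yet it is not proportional to any exponential $(\lambda^n)_n$, and is not even in the closed convex hull of $\mathbb{R}_{\ge0}\,\mathcal{E}_\rho(U(1))$: every $y$ in that hull satisfies $y_1y_{-1}\ge y_0^2$ by Cauchy--Schwarz, whereas $x_1x_{-1}=(2-\sqrt3)^2<1=x_0^2$. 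So the pure-potential branch of your plan cannot yield $x=a\otimes e$, and the obstruction you isolated affects the subharmonic (as opposed to harmonic) version of the statement itself, not only its proof.
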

\begin{proof}
    Let $x\in \mathrm{exr}(B_{m, \rho})$. For any $\nu\in \mathcal{U}(G)_{*, +}$ we have $P_{m, \nu}(x)\geq0$ by Proposition \ref{prop:cp}. Moreover, $P_{m, \rho}(P_{m, \nu}(x))=P_{m, \nu}(P_{m, \rho}(x))\leq P_{m, \nu}(x)$, and hence $P_{m, \nu}(x)\in B_{m, \rho}$. Since $\rho$ is generating, we have $\nu\leq \sum_{l=0}^ka_l\rho^{*l}$ for some $a_l\geq 0$ ($l=0, \dots, k$) and $k\geq0$. Thus, we have
    \[P_{m, \nu}(x)\leq \sum_{l=0}^ka_lP_\rho^l(x)\leq \left(\sum_{l=0}^ka_l\right)x,\]
    and hence there exists $\alpha(\nu)\geq 0$ such that $P_{m, \nu}(x)=\alpha(\nu)x$. By the Jordan decomposition (see e.g., \cite{Pederson}), any $\nu\in \mathcal{U}(G)_*$ is uniquely decomposed into a summation of four elements in $\mathcal{U}(G)_{*, +}$. Thus, $\alpha$ induces a linear functional on $\mathcal{U}(G)_*\cong \mathbb{C}[G]$. Therefore, there exists $e\in \mathcal{U}(G)_+$ such that $\nu(e)=\alpha(\nu)$ for any $\nu\in \mathcal{U}(G)$. Namely, we have $P_{m, \nu}(x)=(e, f_\nu)x$.

    For any $\mu\in \mathbb{M}_m(\mathbb{C})^*$, we define $x_\mu:=(\mu\otimes \mathrm{id}_{\mathcal{U}(G)})(x)\in \mathcal{U}(G)$. For any $f, g\in \mathbb{C}[G]$ we have
    \[(\hat\Delta_G(x_\mu), f\otimes g)=(P_{\widehat f}(x_\mu), g)=(e\otimes x_\mu, f\otimes g).\]
    Moreover, since $fg=gf$, we have
    \[(\hat\Delta_G(x_\mu), f\otimes g)=(P_{\widehat g}(x_\mu), f)=(x_\mu\otimes e, f\otimes g).\]
    Thus, $\hat\Delta_G(x_\mu)=x_\mu\otimes e=e\otimes x_\mu$ holds, and $e$ is group-like. Moreover, there exists $a\in \mathbb{M}_m(\mathbb{C})_+$ such that $x=a\otimes e$. Since $x$ is $P_{m, \rho}$-subharmonic, $e$ is also $P_\rho$-subharmonic, i.e., $e\in \mathcal{E}_\rho(G)$. Therefore, we have $\mathrm{exr}(B_{m, \rho})\subset \mathbb{M}_m(\mathbb{C})_+\otimes \mathcal{E}_\rho(G)$, and hence $B_{m, \rho}$ is contained in the closed convex hull of $\mathbb{M}_m(\mathbb{C})_+\otimes \mathcal{E}_\rho(G)$. On the other hand, since $\mathcal{E}_\rho(G)\subset B_{1, \rho}$ by definition, we have $\mathbb{M}_m(\mathbb{C})_+\otimes \mathcal{E}_\rho(G)\subset B_{m, \rho}$. Thus, the closed convex hull of $\mathbb{M}_m(\mathbb{C})_+\otimes \mathcal{E}_\rho(G)$ is contained in $B_{m, \rho}$, that is, they are equal.
\end{proof}

\begin{remark}
    Our results in this section can be extended to compact quantum groups with slight modifications. See, e.g., \cite{NT:book} for the basics of compact quantum groups. If $G$ is a compact \emph{quantum} group, then $\mathbb{C}[G]$ is non-commutative, and hence the convolution product on $\mathcal{U}(G)_*$ is also non-commutative. However, for any $\rho\in \mathcal{U}(G)_{*, +}$, Propositions \ref{prop:cp}, \ref{prop:int_rep2} and Lemma \ref{lem:choquet_convex2} hold true. In addition, if $\rho$ is generating and central (i.e., $\rho$ satisfies $\rho*\nu=\nu*\rho$ for any $\nu\in \mathcal{U}(G)_*$), then Theorem \ref{thm:BCD} holds true. However, we note that, in the case of $SU_q(2)$ for instance, every central element in $\mathcal{U}(SU_q(2))_*$ is proportional to the counit (i.e., it is not generating) since the center of $\mathbb{C}[SU_q(2)]$ is trivial (see \cite[Corollary 4.4]{KS}).
\end{remark}

By Theorem \ref{thm:BCD}, we can realize that $B_{m, \rho}$ is similar to the convex cones of separable elements. Namely, any subharmonic elements are produced from tensors of two positive elements. This observation and the generalized quantum de Finetti theorem (see Theorem \ref{thm:qD}) suggest the relation between $B_{m, \rho}$ and sub-extendability. We will investigate this point in the next section.

%%%%%%%%%%%%%%%%%%%%%%%%%%%%%%%%%%%%%%%%%%%%%%%%%%%%%%%%%%%%%%%%%%%%%%%%%%%%%%%%%%%%%%%%%%%%%%%%%%%%%%%%%%%%%%%%%%%%%%%%%%%%%%%%%%%%%%%%%%%%%%%%%%%%%%%%%%%%%%%%%%%%
\section{The relation between the matrix-valued Martin boundary and the generalized quantum de Finetti theorem: quantum random walks}
In this part, we deepen our understanding of the relation between matrix-valued Martin boundary and the generalized quantum de Finetti theorem. Throughout this section, we fix a finite-dimensional unitary representation $(\Pi, \mathcal{H})$ of $G$ and $\rho\in B(\mathcal{H})_{*, +}$ that is faithful. Let us denote by the same symbol $\rho$ the linear functional on $\mathcal{U}(G)$ defined as $\rho\circ \Pi$. We remark that $\rho$ is generating if $(\Pi, \mathcal{H})$ is also \emph{generating} $\widehat G$, that is, for any $\alpha\in\widehat G$ there exists $l\geq 0$ such that $(\pi_\alpha, \mathcal{H}_\alpha)$ is a sub-representation of $(\Pi^{\otimes l}, \mathcal{H}^{\otimes l})$\footnote{For instance, the fundamental representation of $SU(n)$ is generating. On the other hand, it fails for $U(n)$. It is \emph{not} generating. In fact, the fundamental representation of $U(n)$ does not generate irreducible representations with highest weights containing negative parts. However, it plays the central role in the next section that concerns the relationship separable states of a bipartite quantum system and matrix-valued Martin boundary of random walks on the dual of $U(n)$.}.

%do we say somewhere that this is close to the Biane Izumi quantum random walk?
Let us recall that the UHF (uniformly hyperfinite) algebra $B(\mathcal{H})^{\otimes\infty}$ is defined as the $C^*$-inductive limit of the $B(\mathcal{H})^{\otimes l}$ ($l\geq 0$). If $\rho$ is a state of $B(\mathcal{H})$, then the infinite tensor product state $\rho^{\otimes \infty}$ on $B(\mathcal{H})^{\otimes \infty}$ is defined. Thus, we obtain the $C^*$-probability space $(B(\mathcal{H})^{\otimes \infty}, \rho^{\otimes\infty})$. Moreover, we can regard the family of the subalgebras $B(\mathcal{H})^{\otimes l}$ as a \emph{non-commutative analogue} of filtration of $\sigma$-algebras. Namely, $(B(\mathcal{H})^{\otimes \infty}, (B(\mathcal{H})^{\otimes l})_{l=0}^\infty, \rho^{\otimes \infty})$ is a \emph{filtered} $C^*$-probability space. For every $l\geq 0$ the conditional expectation $E_l$ from $B(\mathcal{H})^{\otimes \infty}$ to $B(\mathcal{H})^{\otimes l}$ is also defined as the linear map induced by $\mathrm{id}_{B(\mathcal{H})}^{\otimes l}\otimes \rho^{\otimes k}\colon B(\mathcal{H})^{\otimes l+k}\to B(\mathcal{H})^{\otimes l}$ ($k\geq 0$). Under this point of view, a sequence $(x_l)_{l=0}^\infty$ in $(B(\mathcal{H})^{\otimes \infty}, \rho^{\otimes\infty})$ is said to be \emph{adapted sub-martingale} if $x_l\in B(\mathcal{H})^{\otimes l}$ and $E_l(x_{l+1})\leq x_l$ for every $l\geq0$. According to \eqref{eq:extensions}, we can interpret $K_{m, \rho}$ as the set of $\mathbb{M}_m(\mathbb{C})$-valued, positive, symmetric, adapted sub-martingale sequences. Here, $B(\mathcal{H})$ is naturally identified with $\mathbb{M}_n(\mathbb{C})$ if $\dim \mathcal{H}=n$. Such an interpretation was given initially by Izumi \cite{Izumi}. In his work, he initiated the non-commutative Poisson boundary theory. In the paper, our setting is slightly different from his work, but we can find a connection between such a non-commutative boundary theory and quantum information theory.

We compare $K_{m, \rho}$ and $B_{m, \rho}$. For any $l\geq 0$ we define
\[\Pi_{m, l}:=(\mathrm{id}_m\otimes \Pi^{\otimes l})\colon \mathbb{M}_m(\mathbb{C})\otimes \mathcal{U}(G)\to \mathbb{M}_m(\mathbb{C})\otimes B(\mathcal{H})^{\otimes l},\]
\[\Pi_{m, \infty}\colon \mathbb{M}_m(\mathbb{C})\otimes \mathcal{U}(G)\to \prod_{l=0}^\infty\mathbb{M}_m(\mathbb{C})\otimes B(\mathcal{H})^{\otimes l}\]
by $\Pi_{m, \infty}(x)=(\Pi_{m, l}(x))_{l=0}^\infty$, where $\Pi^{\otimes 0}:=\epsilon_G$. We remark that $\Pi_{m, l}(x)\in \mathbb{M}_m(\mathbb{C})\otimes (B(\mathcal{H})^{\otimes l})^{S_l}$ for any $x\in \mathbb{M}_m(\mathbb{C})\otimes \mathcal{U}(G)$ and $l\geq0$. In fact, we may observe the following: Let $\mu\in \mathbb{M}_m(\mathbb{C})^*$ and $x_\mu:=(\mu\otimes \mathrm{id}_{\mathcal{U}(G)})(x)\in \mathcal{U}(G)$. Since $\mathbb{C}[G]$ is a commutative algebra, for any $\sigma\in S_l$ and $\nu_1, \dots, \nu_l\in B(\mathcal{H})^*$ and $\mu\in \mathbb{M}_m(\mathbb{C})^*$ we have
\begin{align*}
    (\nu_1\otimes \cdots \otimes \nu_l)(\sigma\cdot \Pi_{1, l}(x_\mu))
     & =(\nu_{\sigma^{-1}(1)}\otimes\cdots\otimes \nu_{\sigma^{-1}(l)})(\Pi_{1, l}(x_\mu))                                     \\
     & =(\hat\Delta_G^{(l)}(x_\mu), f_{\nu_{\sigma^{-1}(1)}\circ \Pi}\otimes \cdots \otimes f_{\nu_{\sigma^{-1}(l)}\circ \Pi}) \\
     & =(x_\mu, f_{\nu_1\circ\Pi} \cdots f_{\nu_l\circ\Pi})                                                                    \\
     & =(\nu_1\otimes \cdots \otimes \nu_l)(\Pi_{1, l}(x_\mu)).
\end{align*}
Thus, we have $\sigma\cdot \Pi_{1, l}(x_\mu)=\Pi_{1, l}(x_\mu)$, and hence $\Pi_{m, l}(x)\in \mathbb{M}_m(\mathbb{C})\otimes (B(\mathcal{H})^{\otimes l})^{S_l}$.

We start to discuss the relation among the results of the previous two sections.
\begin{proposition}\label{prop:subharmonic_vs_extensions}
    $\Pi_{m, \infty}(B_{m, \rho})$ is contained in $K_{m, \rho}$. Moreover, $\Pi_{m, \infty}$ is injective if $(\Pi, \mathcal{H})$ is generating $\widehat G$.
\end{proposition}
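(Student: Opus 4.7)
The plan is to verify the two claims in order. For the inclusion $\Pi_{m,\infty}(B_{m,\rho}) \subseteq K_{m,\rho}$, fix $x \in B_{m,\rho}$ and check the two defining conditions of $K_{m,\rho}$ for the sequence $(\Pi_{m,l}(x))_{l=0}^\infty$. Positivity and membership in $\mathbb{M}_m(\mathbb{C}) \otimes (B(\mathcal{H})^{\otimes l})^{S_l}$ follow immediately: $\Pi^{\otimes l}$ is a unital $*$-homomorphism on $\mathcal{U}(G)$, so $\Pi_{m,l}$ preserves positivity, and the $S_l$-invariance is exactly the calculation carried out in the paragraph preceding the proposition.

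The heart of the first part is the sub-martingale inequality. The key identity to establish is
\[
(\mathrm{id}_m \otimes \mathrm{id}_{B(\mathcal{H})}^{\otimes l} \otimes \rho) \circ \Pi_{m,l+1} \;=\; \Pi_{m,l} \circ P_{m,\rho}
\]
as maps $\mathbb{M}_m(\mathbb{C}) \otimes \mathcal{U}(G) \to \mathbb{M}_m(\mathbb{C}) \otimes B(\mathcal{H})^{\otimes l}$. This I would verify by a duality pairing: it is enough to check equality after pairing with elements of the form $\mu \otimes f_1 \otimes \cdots \otimes f_l$ with $\mu \in \mathbb{M}_m(\mathbb{C})^*$ and $f_i \in \mathbb{C}[G]$, in which case both sides unpack, via the definition of $\Pi^{\otimes(l+1)} = (\Pi^{\otimes l}\otimes\Pi)\circ\hat\Delta_G$ and of $P_\rho$ through $f \mapsto f f_\rho$, to $(\mu \otimes \mathrm{id})(x)$ paired with $f_1 \cdots f_l \cdot f_\rho$ in $\mathbb{C}[G]$. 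Granted this identity, since $x \in B_{m,\rho}$ means $P_{m,\rho}(x) \leq x$ and $\Pi_{m,l}$ preserves the order on self-adjoint elements, we obtain $\Pi_{m,l}(P_{m,\rho}(x)) \leq \Pi_{m,l}(x)$, which is precisely the desired sub-martingale inequality.

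For the injectivity claim under the assumption that $(\Pi, \mathcal{H})$ generates $\widehat G$, I would use the Peter--Weyl decomposition $\mathcal{U}(G) \cong \prod_{\alpha \in \widehat G} B(\mathcal{H}_\alpha)$ via $\Pi_G$. Since $\mathbb{M}_m(\mathbb{C})$ is finite-dimensional, this gives $\mathbb{M}_m(\mathbb{C}) \otimes \mathcal{U}(G) \cong \prod_{\alpha} \mathbb{M}_m(\mathbb{C}) \otimes B(\mathcal{H}_\alpha)$, so an element $x$ vanishes if and only if $(\mathrm{id}_m \otimes \pi_\alpha)(x) = 0$ for every $\alpha \in \widehat G$. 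Suppose $\Pi_{m,\infty}(x) = 0$. Fix $\alpha \in \widehat G$; by the generating hypothesis there exists $l \geq 0$ and an isometric intertwiner $V \colon \mathcal{H}_\alpha \hookrightarrow \mathcal{H}^{\otimes l}$ such that $\Pi^{\otimes l}(y) V = V \pi_\alpha(y)$ for all $y \in \mathcal{U}(G)$. Consequently $(\mathrm{id}_m \otimes \pi_\alpha)(x) = (\mathrm{id}_m \otimes V^*)\, \Pi_{m,l}(x)\, (\mathrm{id}_m \otimes V) = 0$. Since this holds for every $\alpha$, we conclude $x = 0$.

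The only mildly subtle step is the compatibility identity in the middle paragraph, since it requires keeping track of which tensor factor $\rho$ acts on and using that $\Pi^{\otimes(l+1)}$ is really $(\Pi^{\otimes l} \otimes \Pi) \circ \hat\Delta_G$; I would not expect a genuine obstacle, only bookkeeping. Everything else is a direct consequence of definitions and the Peter--Weyl decomposition.
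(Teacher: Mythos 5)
Your proposal is correct and follows essentially the same route as the paper: the paper also reduces the sub-martingale inequality to the intertwining relation $(\mathrm{id}_m\otimes\mathrm{id}_{B(\mathcal{H})}^{\otimes l}\otimes\rho)\circ\Pi_{m,l+1}=\Pi_{m,l}\circ P_{m,\rho}$ (verified there by pairing with positive functionals $\nu$ on $B(\mathcal{H})^{\otimes l}$ via the convolution identity $(\nu\circ\Pi^{\otimes l})*\rho$, which is the same bookkeeping you describe), and it proves injectivity exactly as you do, by extracting each $\pi_\alpha$ as a subrepresentation of some $\Pi^{\otimes l}$ and using the Peter--Weyl identification of $\mathcal{U}(G)$ with $\prod_{\alpha}B(\mathcal{H}_\alpha)$.
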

\begin{proof}
    Ler $x\in B_{m, \rho}$. For any $l\geq 0$ we already know that $\Pi_{m, l}(x)$ is in $(\mathbb{M}_m(\mathbb{C})\otimes (B(\mathcal{H})^{\otimes l})^{S_l})_+$. Moreover, for any $\nu\in (B(\mathcal{H})^{\otimes l})^*_+$ we have
    \begin{align*}
        (\mathrm{id}_m\otimes \nu\otimes \rho)(\Pi_{m, l+1}(x))
         & =(\mathrm{id}_m\otimes ((\nu\circ\Pi^{\otimes l})*\rho))(x)     \\
         & =(\mathrm{id}_m\otimes \nu\circ\Pi^{\otimes l})(P_{m, \rho}(x)) \\
         & \leq (\mathrm{id}_m\otimes\nu\circ\Pi^{\otimes l})(x)           \\
         & =(\mathrm{id}_m\otimes \nu)(\Pi_{m, l}(x)).
    \end{align*}
    Therefore, $(\mathrm{id}_m\otimes \mathrm{id}_{B(\mathcal{H})}^{\otimes l} \otimes \rho)(\Pi_{m, l+1}(x))\leq \Pi_{m, l}(x)$ holds, and hence $\Pi_{m, \infty}(x)\in K_{m, \rho}$.

    We assume that $(\Pi, \mathcal{H})$ is generating $\widehat G$ and prove that $\Pi_{m, \infty}$ is injective. We suppose that $\Pi_{m, \infty}(x)=\Pi_{m, \infty}(y)$ for $x, y\in \mathbb{M}_m(\mathbb{C})\otimes \mathcal{U}(G)$. Since $(\Pi, \mathcal{H})$ is generating $\widehat G$, for any $\alpha\in \widehat G$ the corresponding irreducible representation $(\pi_\alpha, \mathcal{H}_\alpha)$ is contained in $(\Pi^{\otimes l}, \mathcal{H}^{\otimes l})$ for some $l\geq0$. Since $\Pi_{m, l}(x)=\Pi_{m, l}(y)$, we have $(\mathrm{id}_m\otimes \pi_\alpha)(x)=(\mathrm{id}_m\otimes \pi_\alpha)(y)$. Thus, $x=y$ holds, and hence $\Pi_{m, \infty}$ is injective.
\end{proof}

The next statement immediately follows from Theorem \ref{thm:BCD} if $(\Pi, \mathcal{H})$ is generating $\widehat G$. However, by Lemma \ref{lem:choquet_convex1}, Theorem \ref{thm:ext_pt1}, and the above proposition, the following holds true even if $(\Pi, \mathcal{H})$ is not generating $\widehat G$.

\begin{corollary}\label{cor:boundary_sep}
    $\Pi_{m, 1}(B_{m, \rho})$ is contained in the set of separable elements in $\mathbb{M}_m(\mathbb{C})\otimes \mathbb{M}_n(\mathbb{C})$.
\end{corollary}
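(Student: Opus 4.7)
The plan is to chain together the three structural results invoked in the remark preceding the corollary: Proposition \ref{prop:subharmonic_vs_extensions} transports the subharmonic condition on $\mathcal{U}(G)$ into a sub-martingale condition on the UHF-type tower, and then Lemma \ref{lem:choquet_convex1} together with Theorem \ref{thm:ext_pt1} forces the first component of that tower to be separable.

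More precisely, fix $x \in B_{m,\rho}$ and set $y := \Pi_{m,\infty}(x) \in \prod_{l=0}^\infty \mathbb{M}_m(\mathbb{C}) \otimes (B(\mathcal{H})^{\otimes l})^{S_l}$. By Proposition \ref{prop:subharmonic_vs_extensions} we already have $y \in K_{m,\rho}$ (using that $B(\mathcal{H}) \cong \mathbb{M}_n(\mathbb{C})$ after choosing an orthonormal basis of $\mathcal{H}$). By Lemma \ref{lem:choquet_convex1}, $y$ lies in the closed convex hull of $\mathrm{exr}(K_{m,\rho})$, so I can write $y$ as a limit (in the topology of componentwise weak${}^*$ convergence) of finite convex combinations $\sum_j t_j^{(\alpha)} z^{(j,\alpha)}$ with $z^{(j,\alpha)} \in \mathrm{exr}(K_{m,\rho})$.

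By Theorem \ref{thm:ext_pt1}, each extreme ray $z^{(j,\alpha)}$ has the product shape $(a_{j,\alpha} \otimes b_{j,\alpha}^{\otimes l})_{l=0}^\infty$ with $a_{j,\alpha} \in \mathbb{M}_m(\mathbb{C})_+$ and $b_{j,\alpha} \in \mathbb{M}_n(\mathbb{C})_+$. In particular, the $l=1$ component of every such extreme ray is the simple tensor $a_{j,\alpha} \otimes b_{j,\alpha}$, which is manifestly separable. Since the projection onto the $l=1$ coordinate is continuous from $K_{m,\rho}$ (equipped with componentwise weak${}^*$ topology) to $\mathbb{M}_m(\mathbb{C}) \otimes \mathbb{M}_n(\mathbb{C})$ (with its finite-dimensional topology), applying it to the approximating convex combinations gives $\Pi_{m,1}(x) = y_1$ as a limit of convex combinations of simple positive tensors.

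It then remains to observe that the convex cone of separable elements in $\mathbb{M}_m(\mathbb{C}) \otimes \mathbb{M}_n(\mathbb{C})$ is closed—this is standard, since within any bounded trace ball it is the convex hull of the compact set of normalized product tensors—so $\Pi_{m,1}(x)$ is separable, completing the argument. The only step requiring any real care is verifying that the coordinate projection $K_{m,\rho} \to \mathbb{M}_m(\mathbb{C}) \otimes (\mathbb{M}_n(\mathbb{C}))^{S_1}$ at $l=1$ is weak${}^*$ continuous and that one can pass convex approximations through it, but this is immediate from the definition of the product topology placed on $K_{m,\rho}$ in Section \ref{sec:qdFthm}. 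Notably, no use of the generating hypothesis on $(\Pi, \mathcal{H})$ is needed, since we route the argument through Theorem \ref{thm:ext_pt1} rather than through Theorem \ref{thm:BCD}.
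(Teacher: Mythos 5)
Your proof is correct and follows exactly the route the paper indicates in the remark preceding the corollary: Proposition \ref{prop:subharmonic_vs_extensions} to land $\Pi_{m,\infty}(B_{m,\rho})$ inside $K_{m,\rho}$, then Lemma \ref{lem:choquet_convex1} and Theorem \ref{thm:ext_pt1} together with weak${}^*$ continuity of the $l=1$ coordinate projection and closedness of the finite-dimensional separable cone. The paper supplies no further detail, so your write-up (including the observation that the generating hypothesis on $(\Pi,\mathcal{H})$ is not needed) matches its intended argument.
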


%%%%%%%%%%%%%%%%%%%%%%%%%%%%%%%%%%%%%%%%%%%%%%%%%%%%%%%%%%%%%%%%%%%%%%%%%%%%%%%%%%%%%%%%%%%%%%%%%%%%%%%%%%%%%%%%%%%%%%%%%%%%%%%%%%%%%%%%%%%%%%%%%%%%%%%%%%%%%%%%%%%%
\section{The case $G=U(n)$ and the generalized quantum de Finetti theorem revisited}
In this section, we consider the case $G=U(n)$. For any $t\in U(n)$, recall that $e_t\in \mathcal{U}(U(n))$ denotes the evaluation at $t$, i.e., $(e_t, f)=f(t)$ for all $f\in \mathbb{C}[U(n)]$. It is a group-like of $\mathcal{U}(U(n))$, as for any $f, g\in \mathbb{C}[U(n)]$
\[(\hat\Delta_{U(n)}(e_t), f\otimes g)=(e_t, fg)=f(t)g(t)=(e_t\otimes e_t, f\otimes g),\]

By Weyl's unitary trick, every finite-dimensional representation of $U(n)$ can be extended to a representation of $GL(n, \mathbb{C})$. Thus, for every $t\in GL(n, \mathbb{C})$ its evaluation map $e_t$ on $\mathbb{C}[U(n)]$ is well defined, and is also group-like.

The following was initially mentioned by Biane \cite[Theorem 5.1]{Biane92-2} when $G$ is a connected, simply connected semisimple compact Lie group. Using the same argument, we can then show for $G=U(n)$.

\begin{lemma}\label{lem:hilbert}
    The mapping $t\in GL(n, \mathbb{C})\mapsto e_t\in \mathcal{G}(U(n))$ is a bijection. Moreover, $e_t\in \mathcal{E}(U(n))$ if and only if $t\in GL(n, \mathbb{C})_+:=GL(n, \mathbb{C})\cap \mathbb{M}_n(\mathbb{C})_+$.
\end{lemma}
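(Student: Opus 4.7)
The plan has two largely independent parts: first, identify group-like elements of $\mathcal{U}(U(n))$ with points of $GL(n,\mathbb{C})$; second, identify the positivity condition with $t$ being positive definite.

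For the first part, I would start by verifying that the map $t \mapsto e_t$ is well defined for every $t \in GL(n,\mathbb{C})$: by Weyl's unitary trick, every finite-dimensional unitary representation $\pi$ of $U(n)$ admits a unique extension to a (rational) representation of $GL(n,\mathbb{C})$, so every matrix coefficient $f\in\mathbb{C}[U(n)]$ has a canonical value at $t$, and the computation displayed in the text for $t\in U(n)$ shows that $\hat\Delta_{U(n)}(e_t)=e_t\otimes e_t$ unchanged for $t\in GL(n,\mathbb{C})$. Conversely, given any group-like $e\in\mathcal{G}(U(n))$, the functional $\chi_e(f):=(e,f)$ is a unital multiplicative linear form on $\mathbb{C}[U(n)]$: multiplicativity comes directly from $\hat\Delta_{U(n)}(e)=e\otimes e$, and unitality $\chi_e(1)=1$ follows because $\chi_e(1)^2=\chi_e(1)$ with $e\neq 0$. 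The key algebraic fact I would then invoke is that $\mathbb{C}[U(n)]$ is isomorphic to the affine coordinate ring $\mathcal{O}(GL(n,\mathbb{C}))$; concretely, $\mathbb{C}[U(n)]$ is generated by the entries $u_{ij}$ of the fundamental representation together with their conjugates $\bar u_{ij}$, subject to the unitarity relations, and these present exactly the coordinate ring of $GL(n,\mathbb{C})$ (via $v=(\bar u)^T=u^{-1}$). Since $\mathcal{O}(GL(n,\mathbb{C}))$ is a finitely generated reduced commutative $\mathbb{C}$-algebra, the Nullstellensatz yields that its unital characters are exactly evaluations at points of $GL(n,\mathbb{C})$, producing some $t\in GL(n,\mathbb{C})$ with $e=e_t$. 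Injectivity of $t\mapsto e_t$ is immediate by applying $\chi_{e_t}$ to the fundamental coordinates $u_{ij}$, which recover the matrix $t$.

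For the second part, I would first observe that $\pi_\alpha(e_t)$ coincides with the value $\pi_\alpha(t)$ of the extended representation: indeed, $\langle \pi_\alpha(e_t)\xi,\eta\rangle = (e_t,c^{\pi_\alpha}_{\xi,\eta}) = \langle\pi_\alpha(t)\xi,\eta\rangle$. For the easy direction, assume $e_t\in\mathcal{E}(U(n))$; taking $\pi_\alpha$ to be the fundamental representation $\Pi$, whose holomorphic extension to $GL(n,\mathbb{C})$ is the identity inclusion, we get $\Pi(e_t)=t\geq 0$, so $t\in GL(n,\mathbb{C})_+$. For the reverse direction, assume $t\in GL(n,\mathbb{C})_+$ and write $t=\exp(H)$ for a Hermitian matrix $H$. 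Since every $\pi_\alpha$ extends to a rational (hence complex-analytic) representation of $GL(n,\mathbb{C})$, its differential $d\pi_\alpha\colon\mathfrak{gl}(n,\mathbb{C})\to\mathrm{End}(\mathcal{H}_\alpha)$ is $\mathbb{C}$-linear; since it sends skew-Hermitian matrices to skew-Hermitian operators (the $U(n)$-picture), $\mathbb{C}$-linearity forces $d\pi_\alpha(H)$ to be Hermitian whenever $H$ is. Then $\pi_\alpha(e_t)=\pi_\alpha(\exp H)=\exp(d\pi_\alpha(H))\geq 0$ for every $\alpha\in\widehat{U(n)}$, so $e_t\in\mathcal{U}(U(n))_+$.

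The step I expect to be the most delicate is the identification $\mathbb{C}[U(n)]\cong \mathcal{O}(GL(n,\mathbb{C}))$ and the ensuing description of its characters: one has to check that the presentation by matrix coefficients of $\Pi$ and $\bar\Pi$ together with the unitarity relations really cuts out $GL(n,\mathbb{C})$ as an affine variety, and that every multiplicative functional on this algebra is an evaluation (equivalently, that the algebra has no extra maximal ideals). Beyond this, the rest of the argument is essentially a matter of tracking the duality $\mathcal{U}(U(n))=\mathbb{C}[U(n)]^*$ and invoking the standard relation $\pi_\alpha(\exp X)=\exp(d\pi_\alpha(X))$ valid on the complexified Lie algebra.
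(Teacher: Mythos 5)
Your proposal is correct, and the first half follows essentially the same route as the paper: both arguments rest on Weyl's unitary trick to identify $\mathbb{C}[U(n)]$ with the coordinate ring of the affine variety $GL(n,\mathbb{C})$ (the paper realizes $GL(n,\mathbb{C})$ explicitly as $\{(x,y)\in\mathbb{C}^{n^2+1}\mid y\det(x)=1\}$, which is exactly the presentation you flag as the delicate point), and then invoke Hilbert's Nullstellensatz to conclude that every character, equivalently every group-like element, is an evaluation $e_t$. For the positivity statement the easy direction is identical (apply the fundamental representation to recover $t\geq 0$), but your converse differs mildly from the paper's: you write $t=\exp(H)$ with $H$ Hermitian and use that the complex-linear differential $d\pi_\alpha$ carries Hermitian matrices to Hermitian operators, so $\pi_\alpha(t)=\exp(d\pi_\alpha(H))>0$; the paper instead diagonalizes $t=u\,\mathrm{diag}(a_1,\dots,a_n)u^*$ and uses the weight-space decomposition, on which the diagonal part acts by the positive scalars $a_1^{\lambda_1}\cdots a_n^{\lambda_n}$. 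The two arguments are equivalent in substance (weights are precisely the eigenvalues of the diagonal Cartan action), though your exponential formulation handles negative weights of $U(n)$ automatically, whereas the paper's phrasing ``$a_i\geq 0$'' silently relies on invertibility of $t$ to make $a_i^{\lambda_i}$ meaningful for $\lambda_i<0$. No gaps.
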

\begin{proof}
    Let $e\in\mathcal{G}(U(n))$. By Weyl's unitary trick, $e$ gives a homomorphism from $\mathbb{C}[GL(n, \mathbb{C})]$ to $\mathbb{C}$. We remark that $GL(n, \mathbb{C})$ is an algebraic set by the following identification:
    \[GL(n, \mathbb{C})\cong \{((x_{ij})_{i, j=1}^n, y)\in \mathbb{C}^{n^2+1}\mid y\det(x_{ij})_{i, j=1}^n-1=0\},\]
    where the identification map is given as $x\mapsto (x, \det(x)^{-1})$. Thus, by Hilbert's nullstellensatz, every maximal ideal of $\mathbb{C}[GL(n, \mathbb{C})]$ is the kernel of an evaluation map. In particular, $\ker(e)$ is a maximal ideal, and hence $\ker(e)=\ker(e_t)$ for some $t\in GL(n, \mathbb{C})$. Therefore, we have $e=e_t$, and the mapping $t\mapsto e_t$ is a bijection from $GL(n, \mathbb{C})$ to $\mathcal{G}(U(n))$.

    If $e_t$ is positive, then so is the fundamental representation of $e_t$, i.e., $t$ itself is positive. Conversely, if $t\in GL(n, \mathbb{C})_+$, then we consider its diagonalization $t=u\mathrm{diag}(a_1, \dots, a_n)u^*$, where $u\in U(n)$ and $\mathrm{diag}(a_1, \dots, a_n)$ is the diagonal matrix with entries $a_1, \dots, a_n\geq0$. For any unitary representation $(\pi, \mathcal{H})$ of $U(n)$,
    \[\pi(e_t)=\pi(t)=\pi(u)\pi(\mathrm{diag}(a_1, \dots, a_n))\pi(u)^*\]
    holds, where we remark that $\pi(u)$ is unitary. Moreover, it is well known that $\mathcal{H}$ is decomposed into a direct sum of root spaces. For any $(\lambda_1, \dots, \lambda_n)\in \mathbb{Z}^n$ the diagonal part $\pi(\mathrm{diag}(a_1, \dots, a_n))$ acts as the scalar $a_1^{\lambda_1}\cdots a_n^{\lambda_n}$ on the corresponding root space. Thus, $\pi(\mathrm{diag}(a_1, \dots, a_n))$ is positive, and hence so is $\pi(e_t)$. Namely, $e_t$ is exponential.
\end{proof}

\begin{remark}
    The description of group-like elements (i.e., the first statement of Lemma \ref{lem:hilbert}) is given in \cite[Theorem 3.2.2]{NT:book} for arbitrary compact Lie group.
\end{remark}

\begin{remark}
    An alternative proof of Lemma \ref{lem:hilbert} is also given as follows. Let us consider the embedding $U(n)\hookrightarrow SU(n+1)$ by $u\mapsto u\oplus \det(u)^{-1}$. Thus, we obtain the restriction map from $\mathbb{C}[SU(n+1)]$ to $\mathbb{C}[U(n)]$. Then, its dual map $\mathcal{U}(U(n))\to \mathcal{U}(SU(n+1))$ is an injective unital $*$-homomorphism intertwining both comultiplications. Thus, if $e$ is a group-like element in $\mathcal{U}(U(n))$, then so is in $\mathcal{U}(SU(n+1))$. By \cite[Theorem 5.1]{Biane92-2}, there exists $t\in SL(n+1, \mathbb{C})$ such that $e=e_t$. Let us consider the representation of $\mathcal{U}(SU(n+1))$ induced by the fundamental representation of $SU(n+1)$. Since $e\in \mathcal{U}(U(n))$, the image of $e$ (i.e., $t$) has the form $t'\oplus \det(t')^{-1}$ for some $t'\in GL(n, \mathbb{C})$. Therefore, $e=e_{t'}$ as elements in $\mathcal{U}(U(n))$. Moreover, by \cite[Theorem 5.1]{Biane92-2}, $e$ is positive if and only if $t\in SL(n+1, \mathbb{C})_+$, and it is equivalent to that $t'\in GL(n, \mathbb{C})_+$.
\end{remark}

In what follows, let $(\Pi, \mathbb{C}^n)$ denote the fundamental representation of $U(n)$ and fix a faithful positive linear functional $\rho$ on $\mathbb{M}_n(\mathbb{C})$ ($=B(\mathbb{C}^n)$). As in the previous section, $\rho$ will also denote the linear functional $\rho\circ \Pi$ on $\mathcal{U}(U(n))$. We also define $\Pi_{m, l}$ and $\Pi_{m, \infty}$ as the same as in the previous section. We remark that even if the fundamental representation is \emph{not} generating $\widehat{U(n)}$, the following statement holds true (see also Proposition \ref{prop:subharmonic_vs_extensions}).

\begin{proposition}
    $\Pi_{m, \infty}$ is injective on $\mathbb{M}_m(\mathbb{C})\otimes \mathcal{G}(U(n))$.
\end{proposition}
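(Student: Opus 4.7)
The plan is to leverage Lemma~\ref{lem:hilbert} to reduce the question to a separation statement about polynomials in the matrix entries. By Lemma~\ref{lem:hilbert}, the group-like elements of $\mathcal{U}(U(n))$ are exactly the evaluations $e_t$ for $t\in GL(n,\mathbb{C})$; since the matrix coefficients of the fundamental representation $\Pi$ are precisely the coordinate functions $t\mapsto t_{ij}$ and these separate points of $GL(n,\mathbb{C})\subset\mathbb{C}^{n^2}$, distinct evaluations $e_{t_1},\dots,e_{t_N}$ are linearly independent in $\mathcal{U}(U(n))$. Consequently, any element $x$ of the linear span of $\mathbb{M}_m(\mathbb{C})\otimes\mathcal{G}(U(n))$ inside $\mathbb{M}_m(\mathbb{C})\otimes\mathcal{U}(U(n))$ admits a unique representation
\[
x=\sum_{k=1}^N a_k\otimes e_{t_k},\qquad a_k\in\mathbb{M}_m(\mathbb{C}),\ t_k\in GL(n,\mathbb{C})\ \text{pairwise distinct}.
\]

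Next, suppose $\Pi_{m,\infty}(x)=0$. Since $\Pi$ is the fundamental representation, $\Pi(e_t)=t$, hence $\Pi^{\otimes l}(e_t)=t^{\otimes l}$ for every $l\geq 0$, and therefore
\[
\Pi_{m,l}(x)=\sum_{k=1}^N a_k\otimes t_k^{\otimes l}=0\quad\text{for all } l\geq 0.
\]
Reading off the $(\mathbf{i},\mathbf{j})$-entry of $t_k^{\otimes l}$, which equals the monomial $(t_k)_{i_1 j_1}\cdots(t_k)_{i_l j_l}$, I would conclude that
\[
\sum_{k=1}^N p(t_k)\,a_k=0\quad\text{in } \mathbb{M}_m(\mathbb{C})
\]
for every monomial, and by linearity for every polynomial $p\in\mathbb{C}[t_{ij}]$ in the $n^2$ coordinate functions.

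To finish, I would invoke polynomial interpolation on $\mathbb{C}^{n^2}$: because $t_1,\dots,t_N$ are pairwise distinct, there exist polynomials $p_1,\dots,p_N\in\mathbb{C}[t_{ij}]$ with $p_k(t_l)=\delta_{kl}$. Substituting $p=p_k$ yields $a_k=0$ for every $k$, so $x=0$.

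The main subtlety I expect is the tension between the fact that tensor powers of $\Pi$ only produce \emph{polynomial} (not rational) functions of $t$ -- this is exactly why the fundamental representation of $U(n)$ fails to generate $\widehat{U(n)}$ and why the general injectivity clause of Proposition~\ref{prop:subharmonic_vs_extensions} does not apply -- and the need here to separate finitely many distinct points of $GL(n,\mathbb{C})$ by such polynomials. The point that rescues the argument is that distinct elements of $GL(n,\mathbb{C})\subset\mathbb{C}^{n^2}$ can already be separated by polynomials in the $t_{ij}$ alone, with no need for $\det(t)^{-1}$. It is precisely this observation that explains why, although $\Pi_{m,\infty}$ is not injective on all of $\mathbb{M}_m(\mathbb{C})\otimes\mathcal{U}(U(n))$, its restriction to the span of group-like elements is injective.
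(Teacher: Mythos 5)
Your argument is correct, and it proves something slightly stronger than what is stated: injectivity of $\Pi_{m,\infty}$ on the \emph{linear span} of $\mathbb{M}_m(\mathbb{C})\otimes\mathcal{G}(U(n))$, not just on the set of simple tensors $a\otimes e$. The route is genuinely different from the paper's. The paper keeps the group-like elements abstract and argues representation-theoretically: it parametrizes $\widehat{U(n)}$ by signatures $\lambda$, uses Schur--Weyl duality to realize every polynomial irreducible ($\lambda_n\geq 0$) inside some $\Pi^{\otimes l}$, and then reaches the remaining irreducibles by twisting with powers of $\det^{-1}$, using the group-like property to split $(\mathrm{id}_m\otimes\Pi_{\lambda'}\otimes\Pi_{(-1,\dots,-1)}^{\otimes-\lambda_n})$ across the tensor legs; equality of all $(\mathrm{id}_m\otimes\Pi_\lambda)$-images then forces equality in $\mathcal{U}(U(n))\cong\prod_\alpha B(\mathcal{H}_\alpha)$. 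You instead invoke Lemma \ref{lem:hilbert} to identify $\mathcal{G}(U(n))$ with $GL(n,\mathbb{C})\subset\mathbb{C}^{n^2}$ and reduce everything to polynomial interpolation at finitely many distinct points: the entries of $t^{\otimes l}$ exhaust all monomials in the coordinates $t_{ij}$, and distinct points of $\mathbb{C}^{n^2}$ are separated by such polynomials with no need for $\det(t)^{-1}$. Your approach is more elementary, yields linear independence of distinct group-likes and hence injectivity on the span, and isolates cleanly why the non-generating nature of $\Pi$ is harmless here; the paper's approach avoids the classification of group-like elements and is the kind of argument that transfers to settings where no such classification is available. Two small presentational points: the phrase ``these separate points\ldots hence distinct evaluations are linearly independent'' is doing no work on its own --- the linear independence is really established only by the interpolation step at the end, so you could drop the early claim or defer it; and for the uniqueness of the representation $x=\sum_k a_k\otimes e_{t_k}$ you should say ``with $a_k\neq 0$'' (or ``unique up to vanishing terms''), which again follows from the same interpolation argument.
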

\begin{proof}
    We suppose that $\Pi_{m, \infty}(e_1)=\Pi_{m, \infty}(e_2)$ for $e_1, e_2\in \mathbb{M}_m(\mathbb{C})\otimes \mathcal{G}(U(n))$. In the representation theory, it is well known that $\widehat{U(n)}$ can be parametrized by $\{\lambda=(\lambda_1\geq\cdots \geq\lambda_n)\in \mathbb{Z}^n\}$. By the Schur--Weyl duality, for any $\lambda$ with $\lambda_n\geq0$ the associated irreducible representation, denoted by $(\Pi_\lambda, \mathcal{H}_\lambda)$, is a sub-representation of $(\Pi^{\otimes l}, (\mathbb{C}^n)^{\otimes l})$ for some $l\geq0$. Thus, we have $(\mathrm{id}_m\otimes \Pi_\lambda)(e_1)=(\mathrm{id}_m\otimes \Pi_\lambda)(e_2)$. If $\lambda_n<0$, let $\lambda'=(\lambda_1-\lambda_n, \dots, \lambda_{n-1}-\lambda_n, 0)$. Then, $(\Pi_\lambda, \mathcal{H}_\lambda)$ is equivalent to the tensor product of $(\Pi_{\lambda'}, \mathcal{H}_{\lambda'})$ and $(\Pi_{(-1, \dots, -1)}^{\otimes -\lambda_n}, \mathcal{H}_{(-1, \dots, -1)}^{\otimes -\lambda_n})$, where $\mathcal{H}_{(-1, \dots, -1)}=\mathbb{C}$ and $\Pi_{(-1, \dots, -1)}$ is given as the inverse of the determinant. Since the second tensor components of $e_1, e_2$ are group-like, we have
    \[(\mathrm{id}_m\otimes \Pi_\lambda)(e_1)=(\mathrm{id}_m\otimes \Pi_{\lambda'}\otimes\Pi_{(-1, \dots, -1)}^{\otimes -\lambda_n})(e_1)=(\mathrm{id}_m\otimes \Pi_{\lambda'}\otimes\Pi_{(-1, \dots, -1)}^{\otimes -\lambda_n})(e_2)=(\mathrm{id}_m\otimes \Pi_\lambda)(e_2).\]
    Therefore, we conclude that $(\mathrm{id}_m\otimes \Pi_\lambda)(e_1)=(\mathrm{id}_m\otimes \Pi_\lambda)(e_2)$ for any $\lambda$, and hence $e_1=e_2$.
\end{proof}

Unlike the general setup, we can show the following:
\begin{proposition}\label{prop:surj}
    $K_{m, \rho}$ coincides with $\overline{\Pi_{m, \infty}(B_{m, \rho})}$.
\end{proposition}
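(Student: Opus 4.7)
The plan is to combine the Choquet-type decomposition of $K_{m,\rho}$ developed in Section \ref{sec:qdFthm} with Lemma \ref{lem:hilbert}, which provides the crucial bridge: it identifies positive invertible $n\times n$ matrices with the exponential elements of $\mathcal{U}(U(n))$. The easy inclusion $\overline{\Pi_{m,\infty}(B_{m,\rho})} \subseteq K_{m,\rho}$ will be immediate from Proposition \ref{prop:subharmonic_vs_extensions}, since $K_{m,\rho}$ is closed in its component-wise weak-$*$ topology.

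For the reverse inclusion, note first that since $\Pi_{m,\infty}$ is linear and $B_{m,\rho}$ is a convex cone, $\overline{\Pi_{m,\infty}(B_{m,\rho})}$ is a closed convex cone; by Lemma \ref{lem:choquet_convex1} it therefore suffices to prove that $\mathrm{exr}(K_{m,\rho}) \subseteq \overline{\Pi_{m,\infty}(B_{m,\rho})}$. By Theorem \ref{thm:ext_pt1}, every extreme ray of $K_{m,\rho}$ has the form $(a \otimes b^{\otimes l})_{l=0}^{\infty}$ with $a \in \mathbb{M}_m(\mathbb{C})_+$ and $b \in \mathbb{M}_n(\mathbb{C})_+$ satisfying $\rho(b) \leq 1$. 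When $b \in GL(n,\mathbb{C})_+$, Lemma \ref{lem:hilbert} yields $e_b \in \mathcal{E}(U(n))$; since $e_b$ is group-like, one checks that $\Pi^{\otimes l}(e_b) = b^{\otimes l}$ and $P_\rho(e_b) = \rho(b)\, e_b$, so $a \otimes e_b$ lies in $B_{m,\rho}$ and is mapped by $\Pi_{m,\infty}$ precisely to $(a \otimes b^{\otimes l})_l$.

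To handle non-invertible $b$, I would set $b_\epsilon := c_\epsilon (b + \epsilon I) \in GL(n,\mathbb{C})_+$, with $c_\epsilon := \min\bigl(1,\, 1/(\rho(b) + \epsilon\rho(I))\bigr) \in (0,1]$, so that $\rho(b_\epsilon) \leq 1$ and $b_\epsilon \to b$ as $\epsilon \downarrow 0$. Then each $b_\epsilon^{\otimes l} \to b^{\otimes l}$ in the finite-dimensional space $B(\mathcal{H})^{\otimes l}$, which gives convergence $(a \otimes b_\epsilon^{\otimes l})_l \to (a \otimes b^{\otimes l})_l$ in the component-wise topology of $K_{m,\rho}$, placing the limit in $\overline{\Pi_{m,\infty}(B_{m,\rho})}$.

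The main conceptual point, rather than a technical obstacle, is recognizing that the extreme rays of $K_{m,\rho}$ from Theorem \ref{thm:ext_pt1} and the generators $a \otimes e_b$ of $B_{m,\rho}$ from Theorem \ref{thm:BCD} correspond to one another precisely under the bijection $t \mapsto e_t$ of Lemma \ref{lem:hilbert}; the small perturbation needed to reach invertible $b$ is routine and does not affect the weak-$*$ topology in finite dimensions.
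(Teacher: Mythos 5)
Your proof is correct and follows essentially the same route as the paper: the easy inclusion via Proposition \ref{prop:subharmonic_vs_extensions}, and the reverse inclusion by combining Lemma \ref{lem:choquet_convex1}, Theorem \ref{thm:ext_pt1}, and Lemma \ref{lem:hilbert} to approximate each extreme ray $(a\otimes b^{\otimes l})_l$ by images of $a\otimes e_{b_\epsilon}$ with $b_\epsilon$ invertible. The explicit perturbation argument you give for non-invertible $b$ is exactly the approximation step the paper leaves implicit.
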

\begin{proof}
    By Proposition \ref{prop:subharmonic_vs_extensions}, $\Pi_{m, \infty}(B_{m, \rho})$ is contained in $K_{m, \rho}$. Combining Theorem \ref{thm:ext_pt1} and Lemma \ref{lem:hilbert}, every element in $\mathrm{exr}(K_{m, \rho})$ can be approximated by $\Pi_{m, \infty}(\mathbb{M}_m(\mathbb{C})_+\otimes \mathcal{E}_\rho(U(N)))$. Thus, by Lemma \ref{lem:choquet_convex1}, $K_{m, \rho}$ is contained in $\overline{\Pi_{m, \infty}(B_{m, \rho})}$.
\end{proof}

Let $\rho\in \mathbb{M}_n(\mathbb{C})^*_+$ be faithful. Based on the previous results, a new understanding of separable elements in $(\mathbb{M}_m(\mathbb{C})\otimes \mathbb{M}_n(\mathbb{C}))_+$ is given as follows:
\begin{theorem}\label{thm:sep_subharmonic}
    The set of all separable elements in $(\mathbb{M}_m(\mathbb{C})\otimes \mathbb{M}_n(\mathbb{C}))_+$ is equal to $\overline{\Pi_{m, 1}(B_{m, \rho})}$.
\end{theorem}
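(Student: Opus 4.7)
The theorem asserts that the cone $\mathrm{Sep}_{m,n}$ of separable elements in $(\mathbb{M}_m(\mathbb{C})\otimes\mathbb{M}_n(\mathbb{C}))_+$ coincides with $\overline{\Pi_{m,1}(B_{m,\rho})}$. The plan is to establish the two inclusions separately, each a short argument built on the tools developed above.

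For the inclusion $\overline{\Pi_{m,1}(B_{m,\rho})} \subseteq \mathrm{Sep}_{m,n}$, Corollary \ref{cor:boundary_sep} already places $\Pi_{m,1}(B_{m,\rho})$ inside the separable cone, so it remains only to check that $\mathrm{Sep}_{m,n}$ is closed in finite dimensions. The standard justification, which I would include for completeness, is that the set $\{b\otimes c : b\in \mathbb{M}_m(\mathbb{C})_+,\, c\in \mathbb{M}_n(\mathbb{C})_+,\, \mathrm{Tr}_m(b)=\mathrm{Tr}_n(c)=1\}$ is compact, its convex hull is a compact convex base of $\mathrm{Sep}_{m,n}$ disjoint from $0$, and the cone generated by such a base is automatically closed.

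For the reverse inclusion $\mathrm{Sep}_{m,n} \subseteq \overline{\Pi_{m,1}(B_{m,\rho})}$, since $\overline{\Pi_{m,1}(B_{m,\rho})}$ is a closed convex cone, it suffices to show every elementary product $b\otimes c$ with $b \in \mathbb{M}_m(\mathbb{C})_+$ and $c \in \mathbb{M}_n(\mathbb{C})_+$ lies in this closure. When $c\in GL(n,\mathbb{C})_+$, Lemma \ref{lem:hilbert} yields the exponential element $e_c\in\mathcal{E}(U(n))$ with $\Pi(e_c)=c$, and since $e_c$ is group-like one has $P_\rho(e_c)=\rho(c)\, e_c$; rescaling $(b,c) \mapsto (\rho(c)\, b,\, c/\rho(c))$ reduces to $\rho(c)=1$, whence $b\otimes e_c \in B_{m,\rho}$ and $\Pi_{m,1}(b\otimes e_c) = b\otimes c$. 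For singular $c$, apply the above to $c+\epsilon I \in GL(n,\mathbb{C})_+$ and let $\epsilon \to 0^+$ to obtain $b\otimes c$ in the closure.

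The main obstacle is notational rather than conceptual: one must verify cleanly that the rescaling and the $\epsilon$-perturbation both respect $\Pi_{m,1}$ and converge in the weak${}^*$-topology on $\mathbb{M}_m(\mathbb{C})\otimes B(\mathcal{H})$. A conceptually parallel route, which I would keep in reserve, uses Proposition \ref{prop:surj} in place of Lemma \ref{lem:hilbert}: for a separable $a=\sum_i b_i\otimes c_i$ (with each $c_i$ positive definite after an initial $\epsilon$-perturbation), the explicit sequence $x_l := \sum_i \rho(c_i)^{-(l-1)}\, b_i \otimes c_i^{\otimes l}$ lies in $K_{m,\rho}$ and satisfies $x_1 = a$ \emph{exactly}; Proposition \ref{prop:surj} then identifies $(x_l)_{l\geq 0}$ with a limit in $\Pi_{m,\infty}(B_{m,\rho})$, and reading off the $l=1$ coordinate exhibits $a$ as an element of $\overline{\Pi_{m,1}(B_{m,\rho})}$.
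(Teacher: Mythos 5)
Your proposal is correct, and your primary route for the hard inclusion is genuinely different from the paper's. The paper argues as follows: given a separable $a$, it produces (as in the easy direction of Theorem \ref{thm:qD}) a sequence $(x_l)_{l\ge 0}\in K_{m,\rho}$ with $x_1=a$, and then invokes Proposition \ref{prop:surj} to place this sequence in $\overline{\Pi_{m,\infty}(B_{m,\rho})}$, reading off the first coordinate --- essentially your ``reserve'' route, including the explicit martingale $x_l=\sum_i\rho(c_i)^{-(l-1)}b_i\otimes c_i^{\otimes l}$. Your main argument instead bypasses $K_{m,\rho}$ entirely: for $c\in GL(n,\mathbb{C})_+$ you use Lemma \ref{lem:hilbert} directly to produce $e_{c/\rho(c)}\in\mathcal{E}_\rho(U(n))$, so that $\rho(c)b\otimes e_{c/\rho(c)}\in B_{m,\rho}$ is an honest $\Pi_{m,1}$-preimage of $b\otimes c$, and you handle singular $c$ by an $\epsilon$-perturbation; convexity of $\overline{\Pi_{m,1}(B_{m,\rho})}$ then finishes the inclusion. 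This is shorter and more self-contained, since Proposition \ref{prop:surj} itself rests on Theorem \ref{thm:ext_pt1} plus Lemma \ref{lem:hilbert}; what the paper's route buys is the conceptual link to the cone of sub-martingale sequences, which is the theme of the section. You are also more careful than the paper on the other inclusion: the stated equality requires the separable cone to be closed, which the paper leaves implicit, and your compact-base argument (the convex hull of the normalized product states is a compact base avoiding $0$, so the cone it generates is closed) supplies exactly the missing justification. The only detail worth writing out is the one you already flag: $P_\rho(e_t)=\rho(\Pi(e_t))\,e_t=\rho(t)\,e_t$ for group-like $e_t$, so the rescaling does land $b\otimes e_{c/\rho(c)}$ in $B_{m,\rho}$, and the $\epsilon\to 0^+$ limit is trivially weak${}^*$ convergent in finite dimensions.
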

\begin{proof}
    By Corollary \ref{cor:boundary_sep},  $\Pi_{m, 1}(B_{m, \rho})$ is contained the set of separable elements. On the other hand, if $x\in (\mathbb{M}_m(\mathbb{C})\otimes \mathbb{M}_n(\mathbb{C}))_+$ is separable, then there exists a $l$-sub-extension $x_l$ of $x$ for all $l\geq2$. Moreover, we may assume that $(x_l)_{l=0}^\infty\in K_{m, \rho}$, where $x_1:=x$ and $x_0:=(\mathrm{id}_m\otimes\rho)(x)$. Thus, by Proposition \ref{prop:surj}, we have $(x_l)_{l=0}^\infty \in \overline{\Pi_{m, \infty}(B_{m, \infty})}$, and hence $x\in \overline{\Pi_{m, 1}(B_{m, \rho})}$.
\end{proof}

We remark that the quantum de Finetti theorem was not used in the above proof. Thus, we obtain a second proof of Theorem \ref{thm:qD} based on the results in this section.
\begin{proof}[{Second proof of Theorem \ref{thm:qD}}]
    We show only that $x\in (\mathbb{M}_m(\mathbb{C})\otimes \mathbb{M}_n(\mathbb{C}))_+$ is separable if there exists a $l$-sub-extension of $x_l$ of $x$ for all $l\geq2$. We may assume that $(x_l)_{l=0}^\infty\in K_{m, \rho}$, where $x_1=x$ and $x_0:=(\mathrm{id}_m\otimes \rho)(x)$. By Proposition \ref{prop:surj}, we have $(x_l)_{l=0}^\infty \in \overline{\Pi_{m, \infty}(B_{m, \infty})}$, and hence $x$ is separable.
\end{proof}

%%%%%%%%%%%%%%%%%%%%%%%%%%%%%%%%%%%%%%%%%%%%%%%%%%%%%%%%%%%%%%%%%%%%%%%%%%%%%%%%%%%%%%%%%%%%%%%%%%%%%%%%%%%%%%%%%%%%%%%%%%%%%%%%%%%%%%%%%%%%%%%%%%%%%%%%%%%%%%%%%%%%
\appendix

%%%%%%%%%%%%%%%%%%%%%%%%%%%%%%%%%%%%%%%%%%%%%%%%%%%%%%%%%%%%%%%%%%%%%%%%%%%%%%%%%%%%%%%%%%%%%%%%%%%%%%%%%%%%%%%%%%%%%%%%%%%%%%%%%%%%%%%%%%%%%%%%%%%%%%%%%%%%%%%%%%%%
\section{The infinite dimensional setting}
Up to now in this paper, we considered only finite-dimensional $C^*$-algebras. In this appendix, we show how to extend our results to general $C^*$-algebras. Recall that in Section \ref{sec:qdFthm} and \ref{sec:BCD}, we work with the $C^*$-algebra $\mathbb{M}_m(\mathbb{C})$, or work precisely with the dual $\mathbb{M}_m(\mathbb{C})^*$, identified to the algebra by the Hilbert--Schmidt inner product.

%%%%%%%%%%%%%%%%%%%%%%%%%%%%%%%%%%%%%%%%%%%%%%%%%%%%%%%%%%%%%%%%%%%%%%%%%%%%%%%%%%%%%%%%%%%%%%%%%%%%%%%%%%%%%%%%%%%%%%%%%%%%%%%%%%%%%%%%%%%%%%%%%%%%%%%%%%%%%%%%%%%%
\subsection{A generalized quantum de Finetti theorem for general $C^*$-algebras}
We first collect the basic concepts in the theory of $C^*$-algebras we will need and were not introduced in Section \ref{sec:notations}. Our main references will be \cite{Blackadar,BO:book,Pederson}.

Let $\mathcal{C}\subset B(\mathcal{H})$ and $\mathcal{D}\subset B(\mathcal{K})$ be two $C^*$-algebras acting on the Hilbert spaces $\mathcal{H}$ and $\mathcal{K}$. The algebraic tensor product $\mathcal{C}\odot \mathcal{D}$ of $\mathcal{C}$ and $\mathcal{D}$ is naturally a $*$-subalgebra of $B(\mathcal{H}\otimes \mathcal{K})$, and its norm-closure, denoted by $\mathcal{C}\otimes \mathcal{D}$, is the \emph{minimal} tensor product of $\mathcal{C}$ and $\mathcal{D}$.

Let $\mathcal{A}$ and $\mathcal{B}$ be two $C^*$-algebras and $l$ be a positive integer. As in Section \ref{sec:qdFthm}, for any $\sigma\in S_l$, we denote also by $\sigma$ the automorphism of $\mathcal{A}\otimes \mathcal{B}^{\otimes l}$, given for any $a\in \mathcal{A}$, $b_1, \dots, b_l\in \mathcal{B}$ by
\[\sigma(a\otimes b_1\otimes \cdots \otimes b_l):=a\otimes b_{\sigma(1)}\otimes \cdots \otimes b_{\sigma(l)}.\]
By duality, the action of $S_k$ reduces an action on $(\mathcal{A}\otimes B^{\otimes l})^*$. We will denote by $(\mathcal{A}\otimes \mathcal{B}^{\otimes l})^{*, S_l}$ the subspace of $S_l$-invariant elements of $(\mathcal{A}\otimes \mathcal{B}^{\otimes l})^*$, and by $(\mathcal{A}\otimes \mathcal{B}^{\otimes l})^{*, S_l}_+$ the intersection of $(\mathcal{A}\otimes \mathcal{B}^{\otimes l})^{*, S_l}$ and $(\mathcal{A}\otimes \mathcal{B}^{\otimes l})^*_+$.

For any $b\in B_+$, let $\mu^l_b\colon \mathcal{A}\otimes \mathcal{B}^{\otimes l}\to \mathcal{A}\otimes \mathcal{B}^{\otimes (l+1)}$ denote the linear map given by $\mu^l_b(c)=c\otimes b$ for all $c\in \mathcal{A}\otimes \mathcal{B}^{\otimes l}$. By duality, we get a positive linear map $(\mu^l_b)^*\colon (\mathcal{A}\otimes \mathcal{B}^{\otimes (l+1)})^*\to (\mathcal{A}\otimes \mathcal{B}^{\otimes l})^*$ given by $(\mu^l_b)(\psi):=\psi\circ\mu^l_b$ for $\psi\in (\mathcal{A}\otimes \mathcal{B}^{\otimes (l+1)})^*$. Then, $(\mu^l_b)^*((\mathcal{A}\otimes \mathcal{B}^{\otimes (l+1)})^{*, S_{l+1}})\subset (\mathcal{A}\otimes \mathcal{B}^{\otimes l})^{*, S_l}$.

Generalizing the setting of Section \ref{sec:qdFthm} and keeping the above notations, we defined the following convex cone
\[K_{\mathcal{A}^*, b}:=\left\{(\chi_l)_{l=0}^\infty\in \prod_{l=0}^\infty((\mathcal{A}\otimes \mathcal{B}^{\otimes l})^*)^{S_l}_+\,\middle|\,(\mathrm{id}_\mathcal{A}\otimes\mathrm{id}_{\mathcal{B}^{\otimes l}}\otimes b)(\chi_{l+1})\leq \chi_l \text{ for all }l\geq 0\right\}.\]
Endowing $\prod_{l=0}^\infty (\mathcal{A}\otimes \mathcal{B}^l)^*$ with the topology of component-wise weak${}^*$-convergence, $K_{\mathcal{A}^*, b}$ is a closed convex cone.

\begin{lemma}\label{lem:concex_hull_of_extremals}
    If $b \in \mathcal{B}_+$ is invertible, then $K_{\mathcal{A}^*, b}$ is the closed convex hull of $\mathrm{exr}(K_{\mathcal{A}^*, b})$.
\end{lemma}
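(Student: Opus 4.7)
The plan is to follow the strategy of Lemma \ref{lem:choquet_convex1} essentially verbatim, with the role of $\mathrm{Tr}_m \otimes \rho^{\otimes l}$ replaced by evaluation at the unit $1_\mathcal{A}$, and with the invertibility of $b$ used to recover compactness at each level. Concretely, for each $r > 0$ I would set
\[
C_r := \{(\chi_l)_{l=0}^\infty \in K_{\mathcal{A}^*, b} \mid \chi_0(1_\mathcal{A}) \leq r\}
\]
and aim to show that $C_r$ is a cap of $K_{\mathcal{A}^*, b}$, that $K_{\mathcal{A}^*, b} = \bigcup_{r > 0} C_r$, and then appeal to the Choquet theorem \cite[p.81]{Phelps:book} together with \cite[Proposition 13.1]{Phelps:book} exactly as in the proof of Lemma \ref{lem:choquet_convex1}. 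The union equality is immediate since, for any $(\chi_l)_l \in K_{\mathcal{A}^*,b}$, the positive functional $\chi_0$ is automatically bounded with $\|\chi_0\| = \chi_0(1_\mathcal{A}) < \infty$.

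The substantial step is the compactness of $C_r$. Since $b$ is positive and invertible in $\mathcal{B}$, there exists $\epsilon > 0$ with $b \geq \epsilon \cdot 1_\mathcal{B}$, hence $b^{\otimes l} \geq \epsilon^l \cdot 1_\mathcal{B}^{\otimes l}$ in $\mathcal{B}^{\otimes l}$ for every $l$. Iterating the defining sub-martingale inequality gives, for every $(\chi_l) \in C_r$,
\[
\chi_l(1_\mathcal{A} \otimes b^{\otimes l}) \leq \chi_{l-1}(1_\mathcal{A} \otimes b^{\otimes (l-1)}) \leq \cdots \leq \chi_0(1_\mathcal{A}) \leq r,
\]
and since $\chi_l$ is a positive functional on a unital $C^*$-algebra, $\|\chi_l\| = \chi_l(1_\mathcal{A} \otimes 1_\mathcal{B}^{\otimes l}) \leq \epsilon^{-l} r$. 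By Banach--Alaoglu, $C_r$ is then contained in a product of weak${}^*$-compact norm balls in $(\mathcal{A} \otimes \mathcal{B}^{\otimes l})^*$, and the defining inequalities are weak${}^*$-closed (the maps $(\mu_b^l)^*$ are weak${}^*$-weak${}^*$-continuous, being duals of bounded linear maps, and the positive cone in each dual is weak${}^*$-closed). Hence $C_r$ is compact. Convexity of $C_r$ and of its complement in $K_{\mathcal{A}^*, b}$ is immediate because $\chi \mapsto \chi_0(1_\mathcal{A})$ is a linear functional, so $C_r$ is indeed a cap in the sense of \cite[Proposition 13.2]{Phelps:book}.

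The main obstacle, and the only place where the invertibility hypothesis enters, is precisely in converting the sub-martingale control $\chi_l(1_\mathcal{A} \otimes b^{\otimes l}) \leq \chi_0(1_\mathcal{A})$ into a component-wise norm bound on $\chi_l$. Without invertibility, $b^{\otimes l}$ could be singular in directions that $\chi_l$ exploits, so no uniform bound on $\|\chi_l\|$ could be extracted from the sub-martingale chain, and Banach--Alaoglu would no longer deliver the weak${}^*$-compactness of $C_r$. Once the caps are in place, the union $\bigcup_{r>0} C_r = K_{\mathcal{A}^*, b}$, the Choquet theorem applied to each $C_r$, and \cite[Proposition 13.1]{Phelps:book} (identifying extreme points of caps with points on extreme rays of the ambient cone) yield the claim in the same way as for Lemma \ref{lem:choquet_convex1}.
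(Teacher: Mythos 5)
Your proof is correct and follows essentially the same route as the paper's: the same caps $C_r$, the same use of invertibility of $b$ to get $\|\chi_l\|\leq \epsilon^{-l}r$ via the iterated sub-martingale inequality, Banach--Alaoglu plus closedness in the product to get compactness, and then the Choquet theorem together with \cite[Proposition 13.1]{Phelps:book} as in Lemma \ref{lem:choquet_convex1}. If anything, you spell out slightly more detail (the weak${}^*$-closedness of the defining inequalities and the explicit iteration) than the published argument does.
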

\begin{proof}
    We only need to prove that for any $r>0$,
    \[C_r:=\{\chi=(\chi_l)_{l=0}^\infty\in K_{\mathcal{A}^*, b}\mid x_0(1_\mathcal{A})\leq r \}\]
    is compact, as the rest of the proof is as in Lemma \ref{lem:choquet_convex1}. Since $b$ is invertible, there exists $\epsilon>0$ such that $b > \epsilon 1_\mathcal{B}$. For every $l\geq 0$, since $\chi_l$ is a positive functional, we have
    \[\|\chi_l\| = \chi_l(1_\mathcal{A}\otimes 1_\mathcal{B}^{\otimes l})\leq \epsilon^{-l}\chi_l(1_\mathcal{A}\otimes b^{\otimes l})\leq \epsilon \chi_0(1_\mathcal{A})\leq \epsilon^{-l}r.\]
    By the Banach--Alaoglu theorem, the ball $B_{l, \epsilon^{-l}r}$ of radius $\epsilon^{-l}r$ in $(\mathcal{A}\otimes \mathcal{B}^{\otimes l})^*$ are compact. Hence, $\prod_{l\geq 0}B_{l, \epsilon^{-l}r}$ is compact, and $C_r\subset \prod_{l\geq 0}B_{l, \epsilon^{-l}r}$ being closed, is also compact.
\end{proof}

By the same proof of Proposition \ref{prop:int_rep}, we obtain the following representation theorem:
\begin{proposition}
    Every $\chi\in K_{\mathcal{A}^*, b}\backslash\{0\}$ can be represented by a Borel probability measure $M_\chi$ on $K_{\mathcal{A}^*, b}$ such that $M_\chi$ is supported on any closed subset of $K_{\mathcal{A}^*, b}$ which contains $\mathrm{exr}(K_{\mathcal{A}^*, b})$.
\end{proposition}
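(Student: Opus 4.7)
The plan is to adapt verbatim the argument used for Proposition~\ref{prop:int_rep} in the finite-dimensional setting. Observe first that the proof of Lemma~\ref{lem:concex_hull_of_extremals} actually delivers more than its stated conclusion: it exhibits, for each $r>0$, the compact convex set
\[
C_r=\{\chi\in K_{\mathcal{A}^*,b}\mid \chi_0(1_\mathcal{A})\leq r\},
\]
and the identity $K_{\mathcal{A}^*,b}=\bigcup_{r>0}C_r$. I would first check that each $C_r$ is a cap of the closed convex cone $K_{\mathcal{A}^*,b}$: compactness has been secured (using invertibility of $b$, the bound $\|\chi_l\|\leq\varepsilon^{-l}r$, and Banach--Alaoglu); convexity is immediate; and since $\chi\mapsto\chi_0(1_\mathcal{A})$ is a continuous linear functional, the complement $K_{\mathcal{A}^*,b}\setminus C_r$ is convex as well. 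Hence $\{C_r\}_{r>0}$ is a family of caps covering $K_{\mathcal{A}^*,b}$.

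Given $\chi\in K_{\mathcal{A}^*,b}\setminus\{0\}$, pick any $r>\chi_0(1_\mathcal{A})$ so that $\chi\in C_r$. Applying the Choquet--Bishop--de Leeuw theorem (as stated in \cite[Section~4]{Phelps:book}, in the non-metrizable generality we need since $\mathcal{A}$ and $\mathcal{B}$ are arbitrary $C^*$-algebras) to the compact convex set $C_r$, one obtains a Borel probability measure on $C_r$ representing $\chi$ and vanishing on every Baire subset of $C_r\setminus\mathrm{ex}(C_r)$. Using Zorn's lemma, I would then replace it by a measure $M_\chi$ that is maximal with respect to the Choquet order $\prec$ among probability measures representing $\chi$; standard maximality arguments (cf.\ \cite[Chapter~4]{Phelps:book}) guarantee that $M_\chi$ is supported on every closed subset of $C_r$ containing $\mathrm{ex}(C_r)$.

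Finally, by \cite[Proposition~13.1]{Phelps:book}, every extreme point of a cap of a closed convex cone lies on an extreme ray of the cone, so $\mathrm{ex}(C_r)\subset\mathrm{exr}(K_{\mathcal{A}^*,b})$. Consequently, if $F$ is any closed subset of $K_{\mathcal{A}^*,b}$ containing $\mathrm{exr}(K_{\mathcal{A}^*,b})$, then $F\cap C_r$ is a closed subset of $C_r$ containing $\mathrm{ex}(C_r)$, and therefore $M_\chi$ is supported on $F$. I do not anticipate any genuine obstacle: the only point deserving care is the verification that $C_r$ is a cap (and in particular the convexity of its complement) and the appeal to the non-metrizable version of Choquet--Bishop--de Leeuw, both of which are routine once one is careful to work with the component-wise weak${}^*$-topology.
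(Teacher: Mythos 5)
Your proposal is correct and follows essentially the same route as the paper: the authors prove this proposition by invoking verbatim the argument of Proposition \ref{prop:int_rep}, namely covering $K_{\mathcal{A}^*,b}$ by the caps $C_r$ furnished in the proof of Lemma \ref{lem:concex_hull_of_extremals}, applying the Choquet--Bishop--de Leeuw theorem to a cap containing $\chi$, passing to a $\prec$-maximal representing measure, and transferring from $\mathrm{ex}(C_r)$ to $\mathrm{exr}(K_{\mathcal{A}^*,b})$ via \cite[Proposition 13.1]{Phelps:book}. Your explicit verification that $C_r$ is a cap (continuity and linearity of $\chi\mapsto\chi_0(1_\mathcal{A})$ giving convexity of the complement) is a welcome detail that the paper leaves implicit by citing \cite[Proposition 13.2]{Phelps:book}.
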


By a similar proof as in Theorem \ref{thm:ext_pt1}, we have
\begin{theorem}\label{thm:A1}
    $\mathrm{exr}(K_{\mathcal{A}^*, b})$ is contained in $\{(\chi\otimes \beta^{\otimes l})_{l=0}^\infty\mid \chi\in \mathcal{A}^*_+, \beta\in \mathcal{B}^*_+\}$.
\end{theorem}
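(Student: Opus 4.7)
I would import the strategy of Theorem \ref{thm:ext_pt1}, with two adaptations forced by the move to general $C^*$-algebras: the roles of ``algebra element'' and ``functional'' are swapped in the last slot, and a boundedness argument replaces automatic norm-continuity. Fix a non-zero $\chi = (\chi_l)_{l=0}^\infty \in \mathrm{exr}(K_{\mathcal{A}^*, b})$.

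For each $c \in \mathcal{B}_+$ set $\chi_{c,l} := (\mu^l_c)^*(\chi_{l+1}) \in (\mathcal{A}\otimes \mathcal{B}^{\otimes l})^*$, i.e., $\chi_{c,l}(y) = \chi_{l+1}(y\otimes c)$, and $\chi_c := (\chi_{c,l})_{l=0}^\infty$. First I would verify $\chi_c \in K_{\mathcal{A}^*, b}$: positivity is immediate from $y\otimes c \geq 0$ whenever $y \geq 0$; the $S_l$-invariance of $\chi_{c,l}$ follows from the natural embedding $S_l \hookrightarrow S_{l+1}$ fixing the last letter, under which $\tilde\sigma(y\otimes c) = \sigma(y)\otimes c$. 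The sub-martingale inequality reduces to showing $\chi_{l+2}(y\otimes b\otimes c) \leq \chi_{l+1}(y\otimes c)$ for $y \geq 0$; the key move is to use $S_{l+2}$-invariance of $\chi_{l+2}$ to transpose the last two $\mathcal{B}$-slots, so that $\chi_{l+2}(y\otimes b\otimes c) = \chi_{l+2}(y\otimes c\otimes b)$, and then apply the original sub-martingale property to $z := y\otimes c \in (\mathcal{A}\otimes \mathcal{B}^{\otimes(l+1)})_+$.

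Next, for $c \leq b$ one has $y\otimes c \leq y\otimes b$ when $y \geq 0$, so $\chi_{c,l}(y) \leq \chi_{l+1}(y\otimes b) \leq \chi_l(y)$, giving $\chi_c \leq \chi$. Extremality of $\chi$ then produces $\alpha(c) \in [0, 1]$ with $\chi_c = \alpha(c)\chi$, and since $c \mapsto \chi_c$ is linear, $\alpha$ is additive and positive-homogeneous on $\{c \in \mathcal{B}_+ : c \leq b\}$. Invertibility of $b$ now enters crucially: since $b \geq \epsilon \cdot 1_\mathcal{B}$ for some $\epsilon > 0$, every $c \in \mathcal{B}_+$ satisfies $(\epsilon/\|c\|)c \leq b$, so one can unambiguously extend by $\alpha(c) := (\|c\|/\epsilon)\,\alpha((\epsilon/\|c\|)c)$, obtaining an additive, positive-homogeneous map $\mathcal{B}_+ \to [0, \infty)$ with $\alpha(c) \leq \|c\|/\epsilon$. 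Passing through $\mathcal{B}_\mathrm{sa} = \mathcal{B}_+ - \mathcal{B}_+$ and $\mathcal{B} = \mathcal{B}_\mathrm{sa} + i\mathcal{B}_\mathrm{sa}$ then yields a bounded positive functional $\beta \in \mathcal{B}^*_+$.

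With $\beta$ in hand, $\chi_{l+1}(y\otimes c) = \beta(c)\chi_l(y)$ first for $y \geq 0$ and $c \in \mathcal{B}_+$, and then bilinearly for all $y \in \mathcal{A}\otimes \mathcal{B}^{\otimes l}$ and $c \in \mathcal{B}$. Thus $\chi_{l+1}$ and $\chi_l \otimes \beta$ agree on the algebraic tensor product; both being bounded positive functionals on the minimal tensor product $\mathcal{A}\otimes \mathcal{B}^{\otimes(l+1)}$, norm-density forces them to coincide, and iterating gives $\chi_l = \chi_0 \otimes \beta^{\otimes l}$ with $\chi_0 \in \mathcal{A}^*_+$, which is the representation claimed. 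The main obstacle, absent in the matrix case, is precisely the boundedness step: linearity and positivity of $\alpha$ are formal consequences of the Jordan decomposition and of linearity of $c\mapsto \chi_c$, but promoting $\alpha$ to a norm-continuous functional requires the coercivity $b \geq \epsilon \cdot 1_\mathcal{B}$, which is exactly why the invertibility hypothesis on $b$ reappears here, as it already did in Lemma \ref{lem:concex_hull_of_extremals}.
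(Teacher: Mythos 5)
Your argument is correct and is essentially the dualized version of the paper's proof of Theorem \ref{thm:ext_pt1}, which is exactly what the paper itself invokes here (``by a similar proof as in Theorem \ref{thm:ext_pt1}''): cut down by $(\mu^l_c)^*$ instead of slicing by a functional, use extremality to get the scalar $\alpha(c)$, and extend to $\beta\in\mathcal{B}^*_+$ via Jordan decomposition. Your observation that invertibility of $b$ (i.e., $b\geq\epsilon 1_{\mathcal{B}}$) plays the role that faithfulness of $\rho$ played in the finite-dimensional case, by guaranteeing $rc\leq b$ for every $c\in\mathcal{B}_+$, is precisely the intended adaptation.
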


We extend Definition \ref{def:extendable} as follows:
\begin{definition}
    Let $\mathcal{A}$ and $\mathcal{B}$ be $C^*$-algebras. (1) A positive linear form $\psi \in (\mathcal{A}\otimes \mathcal{B})^*_+$ is \emph{separable} if $\psi$ belongs to the closed convex hull of $\mathcal{A}^*_+\otimes \mathcal{B}^*_+$. (2) For $b\in \mathcal{B}_+$ and any $l\geq 1$, an element $\varphi$ of $(\mathcal{A}\otimes \mathcal{B}^{\otimes l})^{*, S_l}_+$ is a \emph{$l$-sub-extension} of $\psi \in (\mathcal{A}\otimes \mathcal{B})^*_+$ with respect to $b$ if $(\mu^l_b)^*(\varphi)\leq \psi$. (3) An element $\psi\in (\mathcal{A}\otimes \mathcal{B})^*_+$ is \emph{$l$-sub-extendable} (with respect to $b$) if there is a $l$-sub-extension.
\end{definition}

Replacing Lemma \ref{lem:choquet_convex1} and Theorem \ref{thm:ext_pt1} by Lemma \ref{lem:concex_hull_of_extremals} and Theorem \ref{thm:A1}, the following theorem is proved as Theorem \ref{thm:qD}.
\begin{theorem}[Generalized quantum de Finetti theorem]
    Let $\mathcal{A}$ and $\mathcal{B}$ be $C^*$-algebras, $b\in \mathcal{B}_+$ be invertible and $\psi \in (\mathcal{A}\otimes \mathcal{B})^*_+$. Then, $\psi$ is separable if and only if $\psi$ is $l$-sub-extendable with respect to $b$ for all $l\geq 1$.
\end{theorem}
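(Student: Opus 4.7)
The plan is to adapt the proof of Theorem \ref{thm:qD} to this $C^*$-algebraic setting, replacing Lemma \ref{lem:choquet_convex1} and Theorem \ref{thm:ext_pt1} by Lemma \ref{lem:concex_hull_of_extremals} and Theorem \ref{thm:A1}. The invertibility hypothesis on $b$ plays exactly the role that faithfulness of $\rho$ played before: it supplies some $\varepsilon>0$ with $b\ge \varepsilon 1_\mathcal{B}$, which is the coercive estimate needed for the compactness and norm-boundedness arguments.

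For the forward direction, I would first verify that for each $l\ge 1$ the set $S_l$ of $l$-sub-extendable elements of $(\mathcal{A}\otimes\mathcal{B})^*_+$ is a weak$^*$-closed convex cone. Convexity is immediate from additivity of the partial evaluation on $b$. For closedness, if $\psi_n\to\psi$ weak$^*$ with each $\psi_n$ admitting an $l$-sub-extension $\varphi_n\in(\mathcal{A}\otimes\mathcal{B}^{\otimes l})^{*,S_l}_+$, then the inequality $1_\mathcal{B}^{\otimes l}\le\varepsilon^{-l}b^{\otimes l}$ yields
\[
\|\varphi_n\|=\varphi_n(1)\le\varepsilon^{-l}\varphi_n(1_\mathcal{A}\otimes b^{\otimes l})\le\varepsilon^{-l}\psi_n(1_\mathcal{A}\otimes b),
\]
which is bounded in $n$; by Banach--Alaoglu any weak$^*$-cluster point is an $l$-sub-extension of $\psi$. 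Next, for an elementary tensor $\chi\otimes\beta\in\mathcal{A}^*_+\otimes\mathcal{B}^*_+$ with $\beta\ne 0$, positivity of $\beta(b)\ge\varepsilon\beta(1_\mathcal{B})>0$ lets me write down the explicit $l$-sub-extension $\beta(b)^{-(l-1)}\chi\otimes\beta^{\otimes l}$; the case $\beta=0$ is trivial. Thus $S_l$ contains $\mathcal{A}^*_+\otimes\mathcal{B}^*_+$, and being closed and convex, it contains the closed convex hull of this set, i.e., every separable $\psi$.

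For the converse, suppose $\psi$ admits an $l$-sub-extension $\varphi_l\in(\mathcal{A}\otimes\mathcal{B}^{\otimes l})^{*,S_l}_+$ for each $l\ge 1$. Mirroring Theorem \ref{thm:qD}, for $k\ge l$ I define $\varphi_{l,k}\in(\mathcal{A}\otimes\mathcal{B}^{\otimes l})^{*,S_l}_+$ by evaluating $b$ on the last $k-l$ tensor factors of $\varphi_k$; each $\varphi_{l,k}$ is again an $l$-sub-extension of $\psi$. The same bound $\|\varphi_{l,k}\|\le\varepsilon^{-l}\psi(1_\mathcal{A}\otimes b)$ places the sequence $(\varphi_{l,k})_{k\ge l}$ in a weak$^*$-compact ball. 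A diagonal extraction then yields $\chi_l:=\lim_k\varphi_{l,k}$ satisfying the sub-martingale inequality $(\mathrm{id}_\mathcal{A}\otimes\mathrm{id}_{\mathcal{B}^{\otimes l}}\otimes b)(\chi_{l+1})\le\chi_l$, together with $\chi_1=\psi$ and $\chi_0$ the functional $a\mapsto\psi(a\otimes b)$. Hence $(\chi_l)_{l=0}^\infty\in K_{\mathcal{A}^*,b}$. Lemma \ref{lem:concex_hull_of_extremals} writes $(\chi_l)$ as a weak$^*$-limit of convex combinations of extreme rays, which by Theorem \ref{thm:A1} are of the product form $(\chi\otimes\beta^{\otimes l})_{l=0}^\infty$. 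Projecting onto the $l=1$ coordinate exhibits $\psi=\chi_1$ as a weak$^*$-limit of convex combinations of elements of $\mathcal{A}^*_+\otimes\mathcal{B}^*_+$, so $\psi$ is separable.

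The main obstacle is the pair of compactness/closedness arguments underlying both directions: in each case the only available control on the norms of sub-extensions is the coercive estimate $b\ge\varepsilon 1_\mathcal{B}$. This clarifies that the invertibility of $b$ is not a cosmetic hypothesis but is what powers every compactness step; without a strict lower bound on $b$, neither the weak$^*$-cluster points of sub-extensions nor the closedness of the separable cone can be secured by these methods.
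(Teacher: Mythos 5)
Your proposal is correct and follows essentially the same route as the paper, which simply states that the theorem ``is proved as Theorem \ref{thm:qD}'' after substituting Lemma \ref{lem:concex_hull_of_extremals} and Theorem \ref{thm:A1} for Lemma \ref{lem:choquet_convex1} and Theorem \ref{thm:ext_pt1}; your use of $b\ge\varepsilon 1_{\mathcal{B}}$ as the substitute for faithfulness of $\rho$ in all the norm/compactness estimates is exactly the intended adaptation. You in fact supply a detail the paper leaves implicit --- the weak${}^*$-closedness of the cone of $l$-sub-extendable functionals, needed because separability is now defined via a \emph{closed} convex hull --- and the only residual technicality is that for non-separable algebras the ``diagonal extraction'' should be phrased with subnets or a finite-intersection argument rather than subsequences.
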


%%%%%%%%%%%%%%%%%%%%%%%%%%%%%%%%%%%%%%%%%%%%%%%%%%%%%%%%%%%%%%%%%%%%%%%%%%%%%%%%%%%%%%%%%%%%%%%%%%%%%%%%%%%%%%%%%%%%%%%%%%%%%%%%%%%%%%%%%%%%%%%%%%%%%%%%%%%%%%%%%%%%
\subsection{Infinite-dimensional Biane--Choquet--Deny theorem}
The results that we proved in Section \ref{sec:BCD} can also be extended to a more general $C^*$-algebra context.

Let $G$ be a compact group. We keep the notations introduced in Section \ref{sec:BCD} and fix a positive linear form $\rho\in \mathcal{U}(G)_{*, +}$. For any $m\geq 0$ we defined in Definition \ref{def:4.2} a positive map $P_{m, \rho}$ on $\mathbb{M}_m(\mathbb{C})\otimes \mathcal{U}(G)$.

If $\mathcal{A}$ is a unital $C^*$-algebra, we extend Definition \ref{def:4.2} to a map $P_{\mathcal{A}^*, \rho}$ defined on the tensor product (as linear spaces) of $\mathcal{A}^*\otimes \mathcal{U}(G)$ by $P_{\mathcal{A}^*, \rho}:=\mathrm{id}_{\mathcal{A}^*}\otimes P_\rho$.

We say that $x\in \mathcal{A}^*\otimes \mathcal{U}(G)$ is positive if $(\mathrm{id}_{\mathcal{A}^*}\otimes \pi)(x)$ is a positive linear form in $\mathcal{A}^*\otimes B(\mathcal{H})=(\mathcal{A}\otimes B(\mathcal{H}))^*$ for any finite-dimensional unitary representation $(\pi, \mathcal{H})$ of $G$. Then, by extending the proof of Proposition \ref{prop:cp}, we can check that $P_{\mathcal{A}^*, \rho}$ is a positive map.

Extending the definition given in Equation \eqref{eq:subharmonic}, we consider the following convex cone
\[B_{\mathcal{A}^*, \rho}:=\{x\in (\mathcal{A}^*\otimes \mathcal{U}(G))_+\mid P_{\mathcal{A}^*, \rho}(x)\leq x\}.\]

To generalize the statements and the proofs of Lemma \ref{lem:choquet_convex2}, Proposition \ref{prop:int_rep2}, and Theorem \ref{thm:BCD}, recall (see Section \ref{sec:alg_cpt_groups}) that if $((\pi_\alpha, \mathcal{H}_\alpha))_{\alpha\in \widehat G}$ is the set of irreducible representations of $G$, then each $\pi_\alpha$ extends to a unital $*$-homomorphism from $\mathcal{U}(G)$ to $B(\mathcal{H}_\alpha)$, and that $\mathcal{U}(G)$ is $*$-isomorphic to $\prod_{\alpha\in \widehat G} B(\mathcal{H}_\alpha)$ by $\Pi_G(x):=(\pi_\alpha(x))_{\alpha\in \widehat G}$. Therefore, $\Pi_{\mathcal{A}^*, G}:=\mathrm{id}_{\mathcal{A}^*}\otimes \Pi_G$ is an linear isomorphism from $\mathcal{A}^*\otimes \mathcal{U}(G)$ onto $\mathcal{A}^*\otimes \prod_{\alpha\in \widehat G} B(\mathcal{H}_\alpha)\subset \prod_{\alpha\in \widehat G}\mathcal{A}^*\otimes B(\mathcal{H}_\alpha)$.

For any $\alpha\in \widehat G$, $H_\alpha$ being finite dimensional, we identify $\mathcal{A}^*\otimes B(\mathcal{H}_\alpha)$ with $(\mathcal{A}\otimes B(\mathcal{H}_\alpha))^*$, and we endow $\prod_{\alpha\in\widehat G}\mathcal{A}^*\otimes B(\mathcal{H}_\alpha)$  with the product topology of $(\mathcal{A}\otimes B(\mathcal{H}_\alpha))^*$, endowed with the wak${}^*$-topology. By considering the topology on $\mathcal{A}^*\otimes \mathcal{U}(G)\subset (\mathcal{A}\otimes \mathbb{C}[G])^*$ given by the weak${}^*$ topology on $(\mathcal{A}\otimes \mathbb{C}[G])^*$, the $\Pi_{\mathcal{A}^*, G}$ is a topological isomorphism.

Generalizing Lemma \ref{lem:choquet_convex2} Proposition \ref{prop:int_rep2}, and their proof, we have
\begin{proposition}
    $B_{\mathcal{A}^*, \rho}$ is the closed convex hull of $\mathrm{exr}(B_{\mathcal{A}^*, \rho})$. Moreover, every $x\in B_{A^*, \rho}\backslash\{0\}$ is represented by a Borel probability measure on $\mathrm{exr}(B_{\mathcal{A}^*, \rho})$ (in the sense of Prop. \ref{prop:int_rep}).
\end{proposition}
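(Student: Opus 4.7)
The plan is to adapt the proof of Lemma \ref{lem:choquet_convex2} and Proposition \ref{prop:int_rep2} to this more general setting. As in the finite-dimensional case, both conclusions reduce to exhibiting $B_{\mathcal{A}^*,\rho}$ as a union of compact caps; once that is accomplished, Choquet's theorem \cite[p.~81]{Phelps:book} yields the closed-convex-hull statement, and the Choquet--Bishop--de Leeuw theorem combined with the maximality argument with respect to $\prec$ produces the desired representing measure, following verbatim the proof of Proposition \ref{prop:int_rep}.

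To construct the caps, I replace the traces $\mathrm{Tr}_m \otimes \mathrm{Tr}_\alpha \circ \pi_\alpha$ used in Lemma \ref{lem:choquet_convex2} by evaluation at the unit. Precisely, for every $c = (c_\alpha)_{\alpha \in \widehat{G}} \in \mathbb{R}_{>0}^{\widehat{G}}$ I set
\[
\varphi_c(x) := \sum_{\alpha \in \widehat{G}} c_\alpha \bigl[(\mathrm{id}_{\mathcal{A}^*} \otimes \pi_\alpha)(x)\bigr]\bigl(1_{\mathcal{A}} \otimes 1_{B(\mathcal{H}_\alpha)}\bigr), \qquad x \in (\mathcal{A}^* \otimes \mathcal{U}(G))_+.
\]
Since $(\mathrm{id}_{\mathcal{A}^*} \otimes \pi_\alpha)(x)$ is a positive linear form on the unital $C^*$-algebra $\mathcal{A} \otimes B(\mathcal{H}_\alpha)$, each summand equals the norm of that form, so $\varphi_c$ is additive, positively homogeneous, and, as a countable sum of non-negative weak-$*$ continuous functionals, lower semi-continuous. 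Setting $C_{\varphi_c} := \{x \in B_{\mathcal{A}^*, \rho} \mid \varphi_c(x) \leq 1\}$, the constraint forces each component $(\mathrm{id}_{\mathcal{A}^*} \otimes \pi_\alpha)(x)$ to sit in the weak-$*$ closed ball of radius $c_\alpha^{-1}$ in $(\mathcal{A} \otimes B(\mathcal{H}_\alpha))^*$. By Banach--Alaoglu and Tychonoff the product of these balls is compact, and $C_{\varphi_c}$ sits inside it as a closed subset, since positivity and the subharmonicity inequality $P_{\mathcal{A}^*,\rho}(x) \leq x$ are both preserved by componentwise weak-$*$ limits. Thus $C_{\varphi_c}$ is compact, and \cite[Proposition 13.2]{Phelps:book} turns it into a cap of $B_{\mathcal{A}^*,\rho}$.

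To cover $B_{\mathcal{A}^*, \rho}$ by these caps, fix $x \in B_{\mathcal{A}^*, \rho}$ and set $n_\alpha := \|(\mathrm{id}_{\mathcal{A}^*} \otimes \pi_\alpha)(x)\| < \infty$. Since $\widehat{G}$ is countable, enumerating it as $(\alpha_k)_{k \geq 1}$ and taking $c_{\alpha_k} := 2^{-k}(1+n_{\alpha_k})^{-1}$ gives $\varphi_c(x) \leq 1$, so $x \in C_{\varphi_c}$. Hence $B_{\mathcal{A}^*,\rho} = \bigcup_c C_{\varphi_c}$, and both assertions then follow exactly as in the finite-dimensional proofs. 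The main delicacy is topological: one must check that the weak-$*$ topology on $\mathcal{A}^* \otimes \mathcal{U}(G)$ inherited from $(\mathcal{A} \otimes \mathbb{C}[G])^*$ agrees, on bounded sets, with the product of weak-$*$ topologies on $\prod_{\alpha \in \widehat{G}} (\mathcal{A} \otimes B(\mathcal{H}_\alpha))^*$, so that Banach--Alaoglu can be invoked componentwise. This compatibility is the content of the topological isomorphism $\Pi_{\mathcal{A}^*, G}$ recorded just above the statement, and ultimately descends from the Peter--Weyl decomposition of $\mathbb{C}[G]$.
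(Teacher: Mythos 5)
Your proof is correct and follows essentially the same route as the paper's: the functional you define by evaluating $(\mathrm{id}_{\mathcal{A}^*}\otimes\pi_\alpha)(x)$ at $1_\mathcal{A}\otimes 1_{B(\mathcal{H}_\alpha)}$ is exactly the paper's $(1_\mathcal{A}\otimes\mathrm{Tr}_\alpha)\circ\Pi_{\mathcal{A}^*,\alpha}$, and both arguments then obtain compact caps via Banach--Alaoglu and conclude by Choquet and Choquet--Bishop--de Leeuw as in Proposition \ref{prop:int_rep}. Your explicit choice of weights $c_{\alpha_k}$ to cover $B_{\mathcal{A}^*,\rho}$ by caps is a detail the paper leaves implicit, but there is no substantive difference.
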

\begin{proof}
    For any $\alpha\in \widehat G$ we define a linear functional $1_A\otimes \mathrm{Tr}_\alpha$ on $\mathcal{A}^*\otimes B(\mathcal{H}_\alpha)$ by
    \[(1_\mathcal{A}\otimes \mathrm{Tr}_\alpha)(y):=[(\mathrm{id}_\mathcal{A}\otimes \mathrm{Tr}_\alpha)(y)](1_\mathcal{A}) \quad (y\in \mathcal{A}^*\otimes B(\mathcal{H}_\alpha)),\]
    where $\mathrm{Tr}_\alpha$ is the (non-normalized) trace on $B(\mathcal{H}_\alpha)$. Moreover, for every $c=(c_\alpha)_{\alpha\in \widehat G}\in \mathbb{R}_{>0}^{\widehat G}$ we define $\varphi_c\colon (\mathcal{A}^*\otimes \mathcal{U}(G))_+\to [0, \infty]$ by
    \[\varphi_c(x):=\sum_{\alpha \in \widehat G}c_\alpha (1_A\otimes \mathrm{Tr}_\alpha)(\Pi_{\mathcal{A}^*, \alpha}(x)) \quad (x\in (\mathcal{A}^*\otimes \mathcal{U}(G))).\]
    To prove the statement, it suffices to show that $C_{\varphi_c}:=\{x\in B_{\mathcal{A}^*, \rho}\mid \varphi_c(x)\leq 1\}$ is compact. In fact, the remaining is the same as in Section \ref{sec:BCD}. If $x\in C_{\varphi_c}$, then for every $\alpha\in \widehat G$, we have
    \[\|\Pi_{\mathcal{A}^*, \alpha}(x)\|=|[\Pi_{\mathcal{A}^*, \alpha}(x)](1_\mathcal{A}\otimes 1_{B(\mathcal{H}_\alpha)})|=|(1_\mathcal{A}\otimes \mathrm{Tr}_\alpha)(\Pi_{A^*, \alpha}(x))|\leq c_\alpha^{-1}.\]
    Thus, we have $C_{\varphi_c}\subset \{x\in \mathcal{A}^*\otimes \mathcal{U}(G)\mid \|\Pi_{\mathcal{A}^*, \alpha}(x)\|\leq c_\alpha^{-1}\}$, and hence $C_{\varphi_c}$ is compact.
\end{proof}

By assuming as in Section \ref{sec:BCD} that $\rho$ is generating, and by a proof similar to Theorem \ref{thm:BCD}, we obtain the following result.
\begin{theorem}[{$\mathcal{A}^*$-valued Biane--Choquet--Deny theorem}]
    The extreme points $\mathrm{exr}(B_{\mathcal{A}^*, \rho})$ are contained in $\mathcal{A}^*_+\otimes \mathcal{E}_\rho(G)$. In particular, $B_{\mathcal{A}^*, \rho}$ is the closed convex hull of $\mathcal{A}^*_+\otimes \mathcal{E}_\rho(G)$.
\end{theorem}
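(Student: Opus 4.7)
The plan is to run the argument of Theorem \ref{thm:BCD} essentially verbatim, with $\mathbb{M}_m(\mathbb{C})$ replaced by $\mathcal{A}^*$ throughout and the ``slice by linear functionals $\mu \in \mathbb{M}_m(\mathbb{C})^*$'' step replaced by slicing against elements $a \in \mathcal{A}$, viewed as evaluation maps on $\mathcal{A}^*$. Because the essential work in Theorem \ref{thm:BCD} happens on the $\mathcal{U}(G)$-factor, which is unchanged here, and uses only that the first factor is a proper convex cone with enough separating functionals and a Jordan decomposition on its predual, the structure of the proof carries over cleanly once the positivity and order-preservation of $P_{\mathcal{A}^*, \nu}$ have been checked (as was done when this setup was introduced above).

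Fix $x \in \mathrm{exr}(B_{\mathcal{A}^*, \rho})$. For each $\nu \in \mathcal{U}(G)_{*, +}$, I would first verify that $y_\nu := P_{\mathcal{A}^*, \nu}(x) \in B_{\mathcal{A}^*, \rho}$: positivity is the extension of Proposition \ref{prop:cp} remarked above, and subharmonicity follows from commutativity of $P_\rho$ and $P_\nu$ (i.e., commutativity of $\mathbb{C}[G]$) together with order-preservation of $P_{\mathcal{A}^*, \nu}$. Since $\rho$ is generating, $\nu \leq \sum_{l=0}^{k} a_l \rho^{*l}$ for some $a_l \geq 0$, so $y_\nu \leq (\sum_l a_l) x$; extremality of $x$ then forces $y_\nu = \alpha(\nu) x$ with $\alpha(\nu) \geq 0$. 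The map $\alpha\colon \mathcal{U}(G)_{*, +} \to \mathbb{R}_{\geq 0}$ is additive and positively homogeneous, and by Jordan decomposition on $\mathcal{U}(G)_* \cong \mathbb{C}[G]$ it extends linearly to an element $e \in \mathcal{U}(G)_+$ characterized by $\nu(e) = \alpha(\nu)$. Taking $\nu = \widehat{1}$ (so that $P_{\mathcal{A}^*, \widehat{1}} = \mathrm{id}$) gives $\alpha(\widehat{1}) = 1$, in particular $e \neq 0$.

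Next I identify $e$ as group-like and decompose $x$. For each $a \in \mathcal{A}$, define $x_a \in \mathcal{U}(G)$ by slicing $x$ against the evaluation functional $\chi \mapsto \chi(a)$ on $\mathcal{A}^*$. Slicing the identity $P_{\mathcal{A}^*, \widehat f}(x) = (e, f) x$ gives $P_{\widehat f}(x_a) = (e, f) x_a$ for every $f \in \mathbb{C}[G]$; the same short calculation as in the proof of Theorem \ref{thm:BCD}, using commutativity of $\mathbb{C}[G]$, then yields $\hat\Delta_G(x_a) = e \otimes x_a = x_a \otimes e$. Fixing $f_0 \in \mathbb{C}[G]$ with $(e, f_0) = 1$ and pairing both sides of $\hat\Delta_G(x_a) = e \otimes x_a$ against $f_0 \otimes g$ for variable $g$ gives $x_a = \chi(a) e$ with $\chi(a) := (x_a, f_0)$. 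Linearity of $a \mapsto \chi(a)$ is immediate, so $\chi \in \mathcal{A}^*$ and $x = \chi \otimes e$. Positivity $\chi \in \mathcal{A}^*_+$ follows by applying $\mathrm{id}_{\mathcal{A}^*}$ tensored with the trivial representation to $x$ (noting that $(e, 1) = 1$ for any nonzero group-like $e$, since $(e,1) = (e,1)^2$), while subharmonicity of $x$ forces $\rho(e) \leq 1$, i.e., $e \in \mathcal{E}_\rho(G)$. The ``in particular'' clause then combines this with the preceding proposition together with the obvious inclusion $\mathcal{A}^*_+ \otimes \mathcal{E}_\rho(G) \subset B_{\mathcal{A}^*, \rho}$.

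The step I expect to be most delicate is the slicing identification: one must confirm that slicing against all $a \in \mathcal{A}$ separates points of $\mathcal{A}^* \otimes \mathcal{U}(G)$ and that $\chi\colon a \mapsto (x_a, f_0)$ genuinely defines a linear functional on $\mathcal{A}$. Both are automatic because $x$ is by definition an element of the algebraic tensor product $\mathcal{A}^* \otimes \mathcal{U}(G)$, hence can be written as a finite sum $\sum_i \chi_i \otimes u_i$ and is determined by finitely many slices. The Jordan-decomposition extension of $\alpha$ on $\mathcal{U}(G)_*$, and all remaining order-theoretic arguments, are verbatim those of the finite-dimensional case.
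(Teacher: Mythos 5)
Your proposal is correct and follows exactly the route the paper intends: the paper's own ``proof'' of this theorem is simply the instruction to repeat the argument of Theorem \ref{thm:BCD} with $\mathbb{M}_m(\mathbb{C})$ replaced by $\mathcal{A}^*$, which is what you carry out, including the points (positivity of $P_{\mathcal{A}^*,\nu}$, Jordan decomposition, slicing over the algebraic tensor product) that make the transfer legitimate. The only cosmetic remark is that to deduce $x_a=\chi(a)e$ you should pair the identity $e\otimes x_a=x_a\otimes e$ (rather than $\hat\Delta_G(x_a)=e\otimes x_a$ alone) against $f_0\otimes g$, but you clearly have both equalities in hand, so this is not a gap.
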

}
%%%%%%%%%%%%%%%%%%%%%%%%%%%%%%%%%%%%%%%%%%%%%%%%%%%%%%%%%%%%%%%%%%%%%%%%%%%%%%%%%%%%%%%%%%%%%%%%%%%%%%%%%%%%%%%%%%%%%%%%%%%%%%%%%%%%%%%%%%%%%%%%%%%%%%%%%%%%%%%%%%%%


\begin{thebibliography}{99}
    \bibitem{Biane92} P. Biane, \'Equation de Choquet--Deny sur le dual d'un groupe compact, \emph{Probab. Theory Related Fields}, \textbf{92}(1) (1992), 39--51.
    \bibitem{Biane92-2} P. Biane, Minuscule weights and random walks on lattices, \emph{Quantum Probability and Related Topics}, \textbf{7}(1992), 51--65.
    \bibitem{Biane08} P. Biane, Introduction to random walks on noncommutative spaces, \emph{Quantum Potential Theory}, 61--116, Springer, Berlin, 2008.
    \bibitem{Blackadar} B. Blackadar, \emph{Operator Algebras: Theory of $C^*$-Algebras and von Neumann Algebras}, Encyclopedia of Mathematical Science \textbf{122}, Springer-Verlag, Berlin Heidelberg, 2006.
    \bibitem{BO:book} N. P. Brown, N. Ozawa, \emph{$C^*$-Algebras and Finite-Dimensional Approximations}, Graduate Studies in Mathematics, {\bf 88}, American Mathematical Society, Providence, RI.
    \bibitem{Izumi} M. Izumi, Non-commutative Poisson boundaries and compact quantum group actions, \emph{Adv. Math.} 169 (2002), no. 1, 1–57.
    \bibitem{KoenigRenner} R. Koenig, R. Renner, A de Finetti representation for finite symmetric quantum states, \emph{J. Math. Phys. 46}, 122108 (2005)
    \bibitem{KS} J. Krajczok, P. M. So\l tan, Center of the algebra of functions of the quantum group $SU_q(2)$ and related topics, \emph{Commentationes Mathematicae} \textbf{56}(2) (2016), 251--272.
    \bibitem{NT:book} S. Neshveyev, L. Tuset, \emph{Compact Quantum Groups and Their Representation Categories}, Soc. Math. France, 2013.
    \bibitem{Pederson} G. K. Pederson, \emph{$C^*$-Algebras and Their Automorphism Groups}, second edition, S. Eilers, D. Olesen (Eds.), Pure and Applied Mathematics, Academic Press, London, 2018.
    \bibitem{Phelps:book} R. R. Phelps, \emph{Lectures on Choquet's theorem}, Lecture Notes in Mathematics \textbf{1757}, Springer Berlin, Heidelberg, 2001.
    \bibitem{Stormer} E. St\o rmer, Symmetric States of Infinite Tensor Products of C*-algebras, \emph{J. Funct. Anal.} \textbf{3} (1969), 48--68. 
    
    \bibitem{Woess} W. Woess, \emph{Random Walks on Infinite Graphs and Groups}, Cambridge Tracts in Mathematics \textbf{138}, Cambridge University Press, 2000.
    %finish    
\end{thebibliography}
\end{document}